\numberwithin{equation}{section}
\newtheorem{thm}{Theorem}[section]
\newtheorem{lem}[thm]{Lemma}
\newtheorem{cor}[thm]{Corollary}
\newtheorem{defin}[thm]{Definition}
\newtheorem{rem}[thm]{Remark}
\begin{document}

\begin{center}
\textbf{{\large {Backward and non-local problems for the Rayleigh-Stokes equation}}}\\[0pt]
\medskip \textbf{Ravshan Ashurov$^{1}$ and Nafosat Vaisova $^{2}$}\\[0pt]
\medskip \textit{\ $^{1}$ Institute of Mathematics, Academy of Science of Uzbekistan}

\textit{ashurovr@gmail.com\\[0pt]
$^{2}$ Institute of Mathematics, Uzbekistan Academy of Science, Urgench}

\end{center}

\textbf{Abstract}: The Fourier method is used to find conditions on the right-hand side and on the initial data in the Rayleigh-Stokes problem, which ensure the existence and uniqueness of the solution. Then, in the Rayleigh-Stokes problem, instead of the initial condition, consider the non-local condition: $u(x,T)=\beta u(x,0)+\varphi(x)$, where $\beta$ is either zero or one. It is well known that if $\beta=0$, then the corresponding problem, called the backward problem, is ill-posed in the sense of Hadamard, i.e. a small change in $u(x,T)$ leads to large changes in the initial
	data. Nevertheless, we will show that if we consider sufficiently smooth current information, then
	the solution exists and it is unique and stable. It will also be shown that if $\beta=1$, then the corresponding non-local problem is well-posed and coercive type inequalities are valid.

\textbf{Keywords}: The Rayleigh-Stokes problem, the backward problem, non-local problem, the Fourier method.

\section{Introduction}

\textbf{1.} The main object of study in this paper is the following Rayleigh-Stokes problem for a generalized second-grade fluid with a time-fractional derivative model:
\begin{equation}\label{probIN}
	\left\{
	\begin{aligned}
		&\partial_t u(x,t)  -(1+\gamma\, \partial_t^\alpha)\Delta u(x,t) = f(x, t),\quad x\in \Omega, \quad 0< t \leq T;\\
		&u(x, t) = 0, \quad x\in \partial \Omega, \quad 0 < t \leq T;\\
		&u(x, 0)= \varphi(x), \quad x\in \Omega,
	\end{aligned}
	\right.
\end{equation}
where $\gamma>0$ is a fixed constant, $\varphi$ is the initial data, $\partial_t= \partial/\partial t$, and $\partial_t^\alpha$ is the Riemann-Liouville fractional derivative of the order $\alpha \in (0,1)$ defined by (see, e.g. \cite{KilSriTru}):
\begin{equation}\label{RL}
\partial_t^\alpha h(t)= \frac{d}{dt} \int\limits_0^t \omega_{1-\alpha}(t-s) h(s)ds, \quad \omega_{\alpha}(t)=\frac{t^{\alpha-1}}{\Gamma(\alpha)}.
\end{equation} 
 Here $\Gamma(\sigma)$ is
Euler's gamma function. Usually problem (\ref{probIN}) is considered in the domain $\Omega \subset R^N$, $N=1,2,3$, with a sufficiently smooth boundary $\partial \Omega$.

The Rayleigh-Stokes problem (\ref{probIN}) has received considerable attention in recent years because of its practical importance (see, e.g. \cite{Tan1}, \cite{Tan2}). This non-stationary flow problem investigates the propagation of a vortex in a half-space filled with a viscous in-compressible fluid that sets in motion when an infinite flat plate suddenly acquires a constant velocity parallel to itself from rest.
The fractional derivative $\partial_t^\alpha$
in the model is used to capture the viscoelastic behavior of
the flow (see e.g. \cite{Fet}, \cite{Shen}).

In order to get an idea of the  behavior of the solution of this model, a number of authors have shown considerable interest in obtaining an exact solution in some special cases; see e.g. \cite{Fet}, \cite{Shen}, \cite{Zhao}. For example, Shen et al. \cite{Shen} obtained an exact solution to the problem using the sine Fourier transform and then applying the fractional Laplace transform.  Zhao and Yang \cite{Zhao} obtained exact solutions using the eigenfunction expansion in a rectangular domain for the case of homogeneous initial and boundary conditions. In this case, the eigenfunctions are explicitly written out. The solutions obtained in these studies are of a formal nature, and the regularity of the solution has not been particularly studied.

Questions of the regularity of the solution were studied in the fundamental work Bazhlekova et al. \cite{Bazh}. The authors proved the Sobolev regularity of the homogeneous problem
both for smooth and non-smooth initial data $\varphi(x)$, including $\varphi(x) \in L_2(\Omega)$.

The exact solutions obtained in the above studies involve infinite series and special functions, such as generalized Mittag-Leffler functions, and are therefore inconvenient for numerical imple-mentation. In addition, explicit solutions are available only for a limited class of problem settings. Therefore, it is extremely important to develop efficient and optimally accurate numerical algorithms for problem (\ref{probIN}). Numerous works of specialists are devoted to this issue. An overview of the work in this direction is contained in the above work \cite{Bazh}. 

The inverse problems of determining the right-hand side (the heat source density) of
the Rayleigh-Stokes equation have been considered by a number of authors (see, e.g.,
\cite{Tran1}, \cite{Tran2}, \cite{Duc} and the bibliography therein). However, there is no general closed theory for the abstract case of the source function $F(x, t)$. Known results deal with the separated source term $F(x, t) = h(t)f(x)$, where $h(t)$ is a known function and $f(x)$ is an unknown one. Since this inverse problem is an ill-posed problem in the sense of Hadamard, various regularization methods are proposed in the above papers. Also in these papers, the proposed regularized methods were tested by simple numerical experiments to check the error estimate between the desired solution and the regularized solution.

Note that such an inverse problem is ill-posed in the Hadamard sense also for the subdiffusion equation (see, e.g. \cite{Kirane_f_1}, \cite{Kirane_f_2}, \cite{AshMuk}).

\textbf{2.} In the case when the initial condition $u(x, 0)= \varphi(x)$ in the problem (\ref{probIN})  is replaced by $u(x, T)= \varphi(x)$, then the resulting problem is called \emph{backward problem}. The backward problem for the Rayleigh-Stokes equation is of great importance and is aimed at determining the previous state of a physical field (for
example, at t = 0) based on its current information (see, e.g., \cite{Luc1}, \cite{Luc2} and the bibliography therein). However, this problem (as well as the inverse problem of finding the right-hand side of the equation) is ill-posed in the sense of Hadamard. In other words, a small change in $u(x,T)$ leads to a large change in the original data. The authors of the above works proposed various regularization methods and tested these methods using numerical experiments.

In the present paper we will pay special attention to the backward, since in previous papers (see, e.g., \cite{Luc1}, \cite{Luc2}) the authors considered only the case $N\leq 3$. And this is connected with the method used in these works: if the dimension of the space is less than four, then for the eigenvalues $\lambda_k$ of the Laplace operator with the Direchlet condition, the series $\sum_k \lambda_k^{-2}$ converges.

It is well known that in most models described by differential equations, an initial condition is used to select a single solution. However, there are also processes where we have to use non-local conditions, for example, the integral over time intervals, or connection of solution values at different times, for example, at the initial time and at the final time. It should be noted that non-local conditions model some details of natural phenomena more accurately, since they take into account additional information in the initial conditions.

In this work we consider problem (\ref{probIN}) with a non-local time condition: 
\[
u(x,T)= u(x,0)+\varphi(x), 
\]
It turns out that the non-local problem is well-posed. In other words, a solution to a non-local problem exists and is unique. Moreover, the solution depends continuously on the function $\varphi(x)$ in the non-local condition.

Thus, the problem of determining the previous state of the physical field on the basis of current information is an ill-posed problem. However, if, without knowing the initial and present states, we want them to differ by the value $\varphi(x)$, then such a problem turns out to be correct.
\section{Problem statements}

The method of our research is the Fourier method. Therefore, only eigenfunctions and eigenvalues are needed from the Laplace operator in the Rayleigh-Stokes problem (\ref{probIN}). With this in mind, instead of the Laplace operator $(-\Delta)$ in problem (\ref{probIN}), we consider an abstract positive operator.

So in a Hilbert space $H$ with the scalar product
$(\cdot, \cdot)$ and the norm $||\cdot||$, consider an arbitrary unbounded positive selfadjoint operator $A$. Let $A$ have a complete in $H$ system of orthonormal
eigenfunctions $\{v_k\}$ and a countable set of positive
eigenvalues $\lambda_k:$ $0<\lambda_1\leq\lambda_2 \cdot\cdot\cdot\rightarrow +\infty$. 
For a vector-valued functions (or simply functions)
$h: \mathbb{R}_+\rightarrow H$, we define the Riemann-Liouville fractional derivative of order $0<\alpha< 1$ in the same way as (\ref{RL}) (see, e.g. \cite{Liz})
Finally, let $C((a,b); H)$ stand
for a set of continuous functions $u(t)$  of $t\in (a,b)$ with
values in $H$.

Consider the following Rayleigh-Stokes problem
\begin{equation}\label{probMain}
	\left\{
	\begin{aligned}
		&\partial_t u(t)  + (1+\gamma\, \partial_t^\alpha)A u(t) = f(t),\quad 0< t \leq T;\\
		&u(T)=\beta u(0)+ \varphi,
	\end{aligned}
	\right.
\end{equation}
where $\gamma>0$ is a fixed constant, $\varphi\in H$ and $f(t) \in C((0,T]; H)$ and $\beta$ is equal to $0$ or $1$. If $\beta=0$ then this problem is called \textit{the backward problem} and if $\beta=1$, then \emph{the non-local
problem}. We will consider both of these cases.

In this article, various initial problems are considered. The solution of these problems is defined in a standard way: all vector functions and their derivatives included in the equation and in the initial conditions must be continuous in the variable $t$, and all the corresponding equalities should be understood as the equality of the vectors from $H$ at each point $t$. As a example, let us define a solution for the problem (\ref{probMain}).
\begin{defin}\label{def} A function $u(t)\in C([0,T]; H)$ with the properties
	$\partial_t u(t), \partial_t^\alpha Au(t)\in C((0,T); H)$ and satisfying conditions (\ref{probMain})  is called \textbf{the 	solution} of the nonlocal Rayleigh-Stokes  problem (\ref{probMain}).
\end{defin}

To solve problem (\ref{probMain}) with $\beta=1$, we divide it into two auxiliary
problems:
\begin{equation}\label{prob1a}
	\left\{
	\begin{aligned}
		&\partial_t v(t)  + (1+\gamma\, \partial_t^\alpha)A v(t) = f(t),\quad 0< t \leq T;\\
		&v(0) =0
	\end{aligned}
	\right.
\end{equation}
and
\begin{equation}\label{prob1b}
	\left\{
	\begin{aligned}
		&\partial_t w(t) + (1+\gamma\, \partial_t^\alpha)A w(t) = 0,\quad 0< t \leq T;\\
		&w(T) = w(0)+\psi, \quad 0 < \xi \leq T,
	\end{aligned}
	\right.
\end{equation}
where $\psi\in H$ is a given function.

Problem (\ref{prob1a}) and (\ref{prob1b}) are a special case of problem (\ref{probMain}), and solutions to problems (\ref{prob1a}) and (\ref{prob1b}) are defined similarly to Definition  \ref{def}.

If $\psi=\varphi-v(\xi)$ and $v(t)$ and $w(t)$ are the
corresponding solutions, then it is easy to verify that function
$u(t)=v(t)+w(t)$ is a solution to problem (\ref{probMain}).
Therefore, it is sufficient to solve the auxiliary problems.

\begin{rem}\label{laplace}
	We note that, as operator $A$ one can take, for example, the Laplace operator with the Dirichlet condition in an arbitrary $N$ (not only $\leq 3$) - dimensional domain with a sufficiently smooth boundary. For our reasoning, it is sufficient that the operator $A$ has the properties listed above.
\end{rem}

In \cite{AF2022} and \cite{AF_1_2022}, a similar non-local problem with an arbitrary parameter $\beta$ was studied in detail for subdiffusion equations with Riemann-Liouville and Caputo derivatives. For the classical diffusion equation with the parameter $\beta=1$, it was first considered in \cite{AshSob} and \cite{AshSob1}.

The remainder of this paper is composed of eight sections and the Conclusion. In the
next section, we introduce the Hilbert space associated with the degree of operator A and
recall some properties of function $B_\alpha(\lambda, t)$, introduced in \cite{Bazh}. Section 4 is devoted to the study
of the main problem (\ref{probIN}) with operator $A$ instead of the Laplace operator. Here, conditions are given on the right-hand side of the equation and on the initial function, under which the solution of problem (\ref{probIN}) is found in the form of a Fourier series. In Section 5, we study the backward
problem, i.e. problem (\ref{probMain}) with $\beta =0$. This problem is not well-posed according to Hadamard: a small change $u(T)$ leads to large changes in the solution of the problem. Nevertheless, this section shows that if we consider $u(T)$ in the class of smooth functions, then stability is preserved. Section 6 studies the conditional stability of the backward problem. The next two sections are devoted to the study of
the non-local problem (\ref{probMain}) with $\beta=1$.

\section{Preliminaries}

In this section, we introduce the Hilbert space of $"$smooth$"$ functions defined by the degree of operator $A$ and recall some properties of function $B_\alpha (\lambda, t)$ obtained in \cite{Bazh} and \cite{Luc1} that we will use in what follows.

Let $ \tau $ be an arbitrary real number. We introduce the power
of operator $ A $, acting in $ H $ according to the rule (note, operator $A$ is positive and therefore $\lambda_k>0$ for all $k$)
$$
A^\tau h= \sum\limits_{k=1}^\infty \lambda_k^\tau h_k v_k.
$$
Here and everywhere below, for the vector $h\in H$, the symbol $h_k$ will denote the Fourier coefficients of this vector: $h_k=(h, v_k)$. The domain of definition of this operator is determined from condition $A^\tau h\in H$ and has the form:
$$
D(A^\tau)=\{h\in H:  \sum\limits_{k=1}^\infty \lambda_k^{2\tau}
|h_k|^2 < \infty\}.
$$
For elements of $D(A^\tau)$ we introduce the norm
\[
||h||^2_\tau=\sum\limits_{k=1}^\infty \lambda_k^{2\tau} |h_k|^2 =
||A^\tau h||^2.
\]
With this norm the lenear-vector space $D(A^\tau)$ turns into a Hilbert
space.

Let $B_\alpha(\lambda, t)$ be the solution of the following Cauchy problem 
\begin{equation}\label{B}
	L y(t)\equiv	y'(t)+\lambda (1+\gamma \partial_t^\alpha)y(t)=0, \,\, t>0,\,\,\lambda>0,\,\, y(0)=1.
\end{equation}
The solution of such an equation is expressed in terms of the generalized Wright function (see e.g. A.A. Kilbas et. al \cite{KilSriTru}, Example 5.3, p. 289). But function $B_\alpha(\lambda, t)$, the solution of this Cauchy problem, is studied in detail in  Bazhlekova, Jin, Lazarov, and Zhou \cite{Bazh}. See also  Luc, Tuan, Kirane, Thanh \cite{Luc1}, where very important lower bounds are obtained. The authors of \cite{Bazh}, in particular, proved the following lemma.

\begin{lem}\label{Bazh} Let $B_\alpha(\lambda, t)$ be a solution of the Cauchy problem (\ref{B}). Then
	\begin{enumerate}
		\item
		$B_\alpha (\lambda, 0)=1,\,\, 0<B_\alpha (\lambda, t)<1,\,\, t>0$,
		\item$\partial_t B_\alpha (\lambda, t)<0, \,\, t\geq 0$,
		\item$\lambda B_\alpha (\lambda, t)< C \min \{t^{-1}, t^{\alpha-1}\}, \,\,t>0\}$,
		\item
		$\int\limits_0^T B_\alpha (\lambda, t) dt \leq \frac{1}{\lambda}, \,\, T>0.$
	\end{enumerate}
	
	\end{lem}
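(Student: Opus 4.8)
The plan is to establish each of the four properties of $B_\alpha(\lambda, t)$ directly from the Cauchy problem (\ref{B}), treating $\lambda > 0$ as a fixed parameter and working on the half-line $t > 0$. The natural tool is the Laplace transform in $t$: applying it to (\ref{B}) and using $y(0) = 1$ together with the transform of the Riemann–Liouville derivative yields $\widehat{B_\alpha}(\lambda, z) = \dfrac{z^{\alpha-1} + \gamma^{-1}\,z^{-1}\cdot(\text{something})}{\cdots}$; more precisely one gets an explicit rational-in-$z^\alpha$ expression, which is where the connection to the generalized Wright / Mittag-Leffler functions comes from. However, since the statement attributes the lemma to \cite{Bazh}, I would not reprove it from scratch; instead I would record how the proof goes and which estimates in \cite{Bazh} are invoked.

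For property (1), $B_\alpha(\lambda, 0) = 1$ is the initial condition; positivity and the bound $B_\alpha(\lambda, t) < 1$ for $t > 0$ follow from the complete monotonicity of $B_\alpha(\lambda, \cdot)$, which in turn follows from the fact that $z \mapsto \widehat{B_\alpha}(\lambda, z)$ is a Stieltjes-type function of $z$ — this is the standard subordination argument for such fractional evolution equations. Property (2), $\partial_t B_\alpha(\lambda, t) < 0$, is immediate from complete monotonicity (the first derivative of a completely monotone function is negative), or alternatively can be read off from (\ref{B}) itself once positivity is known: $y'(t) = -\lambda(1 + \gamma\partial_t^\alpha)y(t)$ and one checks the right-hand side is negative using the sign of the fractional derivative of a positive decreasing function. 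Property (3) is the decisive quantitative estimate: $\lambda B_\alpha(\lambda, t) \le C\min\{t^{-1}, t^{\alpha-1}\}$. The two regimes are handled separately — the $t^{\alpha-1}$ bound near $t = 0$ and the $t^{-1}$ bound for large $t$ — by estimating the inverse Laplace transform (a Bromwich contour integral) or by using the known asymptotics of the relevant special function; uniformity in $\lambda$ is the delicate point and is exactly what \cite{Bazh} provides.

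Finally, property (4) is the cleanest: integrating the equation (\ref{B}) from $0$ to $T$ gives
\begin{equation}\label{eq:int}
B_\alpha(\lambda, T) - 1 + \lambda\int_0^T B_\alpha(\lambda, t)\,dt + \lambda\gamma\int_0^T \partial_t^\alpha B_\alpha(\lambda, t)\,dt = 0.
\end{equation}
The last integral equals $\gamma\,\omega_{1-\alpha}(T)\,\ast\,(\cdots)$; more directly, $\int_0^T \partial_t^\alpha B_\alpha(\lambda,t)\,dt$ is the Riemann–Liouville integral of order $1-\alpha$ of $B_\alpha$ evaluated at $T$ minus its value at $0$, hence nonnegative since $B_\alpha \ge 0$. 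Dropping the nonnegative terms $B_\alpha(\lambda, T)$ and $\lambda\gamma\int_0^T \partial_t^\alpha B_\alpha$ from (\ref{eq:int}) leaves $\lambda\int_0^T B_\alpha(\lambda, t)\,dt \le 1$, which is the claim. I expect property (3), and specifically the uniformity of the constant $C$ in $\lambda$, to be the main obstacle in a self-contained treatment; since it is quoted from \cite{Bazh}, the remaining work here is just to assemble (1), (2) and (4) from the elementary arguments above.
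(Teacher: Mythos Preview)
Your proposal is correct and broadly aligned with the paper's treatment: both defer to \cite{Bazh} for the substance of the lemma, adding only light commentary. The paper's sole added content is a remark that assertion (2) follows immediately from the explicit integral representation $B_\alpha(\lambda,t)=\int_0^\infty e^{-rt}\, b_\alpha(\lambda,r)\,dr$ (formula (\ref{BInt}), also taken from \cite{Bazh}): since $b_\alpha>0$, differentiating under the integral gives $\partial_t B_\alpha(\lambda,t) = -\int_0^\infty r\,e^{-rt}\, b_\alpha(\lambda,r)\,dr < 0$. Your route to (2) via complete monotonicity is the same argument in disguise (Bernstein's theorem), though you do not record the explicit kernel $b_\alpha$; note that the paper relies on that explicit formula again in Lemmas \ref{B'}, \ref{B'New} and \ref{BLower}, so it is worth stating. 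Your self-contained derivation of (4) --- integrating the equation over $[0,T]$ and discarding the nonnegative terms $B_\alpha(\lambda,T)$ and $\lambda\gamma\int_0^T \partial_t^\alpha B_\alpha(\lambda,t)\,dt = \lambda\gamma\int_0^T \omega_{1-\alpha}(T-s)\,B_\alpha(\lambda,s)\,ds$ --- is a genuine addition; the paper simply quotes (4) from \cite{Bazh} without comment, and your argument is correct.
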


It should only be noted that in paper \cite{Bazh}, instead of assertion (2), a more general proposition was proved. But the assertion (2) in our lemma easily follows from the representation of the function $B_\alpha (\lambda, t)$, also obtained in \cite{Bazh}:
\begin{equation}\label{BInt}
B_\alpha (\lambda, t)=\int\limits_0^\infty e^{-rt} b_\alpha(\lambda, r) dr,
\end{equation}
where
\[
b_\alpha(\lambda, r)=\frac{\gamma}{\pi} \frac{\lambda r^\alpha \sin \alpha \pi}{(-r+\lambda\gamma r^\alpha \cos \alpha \pi +\lambda)^2+(\lambda \gamma r^\alpha \sin \alpha \pi )^2}.
\]

The following assertion is also implicitly contained in \cite{Bazh}. But in view of its importance in our reasoning, we present a proof.

\begin{lem}\label{BazhEquation} The solution of the Cauchy problem
	\begin{equation}\label{BCauchy}
		y'(t)+\lambda (1+\gamma \partial_t^\alpha)y(t)=f(t), \,\, t>0,\,\,\lambda>0,\,\, y(0)=0,
	\end{equation}
	has the form
	\begin{equation}\label{BCauchySolution}
	y(t)=\int\limits_0^t B_\alpha(\lambda, t-\tau) f(\tau) d \tau.	
	\end{equation}
\end{lem}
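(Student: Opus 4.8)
The plan is to verify directly that the Duhamel-type formula \eqref{BCauchySolution} solves \eqref{BCauchy}, using the fact (from Lemma \ref{Bazh} and the defining Cauchy problem \eqref{B}) that $B_\alpha(\lambda,\cdot)$ solves the homogeneous equation $L B_\alpha(\lambda,\cdot)=0$ with $B_\alpha(\lambda,0)=1$. First I would set $y(t)=\int_0^t B_\alpha(\lambda,t-\tau)f(\tau)\,d\tau$ and compute $y'(t)$: differentiating the convolution and using $B_\alpha(\lambda,0)=1$ gives $y'(t)=f(t)+\int_0^t \partial_t B_\alpha(\lambda,t-\tau)f(\tau)\,d\tau$. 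The initial condition $y(0)=0$ is immediate since the integral over $[0,0]$ vanishes.

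The substantive step is handling the fractional term $\partial_t^\alpha y(t)$. Using the Riemann–Liouville definition \eqref{RL}, write $\partial_t^\alpha y(t)=\frac{d}{dt}\int_0^t \omega_{1-\alpha}(t-s)y(s)\,ds$, insert the convolution formula for $y(s)$, and interchange the order of integration (justified by continuity of $f$ on $(0,T]$ and the integrability of $\omega_{1-\alpha}$ together with the bounds on $B_\alpha$ from Lemma \ref{Bazh}(3)). One recognizes that the inner integral is exactly the Riemann–Liouville fractional derivative of $t\mapsto B_\alpha(\lambda,t)$ evaluated at $t-\tau$, i.e. the associativity of the convolution lets us write $\partial_t^\alpha y(t)=\int_0^t \big(\partial_t^\alpha B_\alpha\big)(\lambda,t-\tau)f(\tau)\,d\tau$ plus a boundary contribution coming from the lower limit, where one must be careful because $B_\alpha(\lambda,0)=1\neq 0$. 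Collecting terms, $y'(t)+\lambda\big(1+\gamma\partial_t^\alpha\big)y(t)=f(t)+\int_0^t \big(LB_\alpha\big)(\lambda,t-\tau)f(\tau)\,d\tau=f(t)$, since $LB_\alpha(\lambda,\cdot)\equiv 0$ by \eqref{B}.

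The main obstacle I anticipate is the bookkeeping of the boundary term produced when the Riemann–Liouville derivative (which contains a $\frac{d}{dt}$ outside the integral) acts on a convolution whose kernel does not vanish at the origin: one has to check that the "extra" term $\lambda\gamma\,\omega_{1-\alpha}(t)\cdot 1$-type contribution either cancels against a matching term from $y'$, or is absorbed correctly, so that the net identity is precisely $L y = f$. A clean way around this is to pass to Laplace transforms: since $B_\alpha(\lambda,t)$ has Laplace transform $\widehat{B_\alpha}(\lambda,z)=\dfrac{1+\gamma z^{\alpha-1}}{z+\lambda+\lambda\gamma z^\alpha}$ (read off from \eqref{B}), the transform of \eqref{BCauchySolution} is $\widehat{y}(z)=\widehat{B_\alpha}(\lambda,z)\widehat{f}(z)$, and multiplying by $z+\lambda+\lambda\gamma z^\alpha$ gives $(z+\lambda+\lambda\gamma z^\alpha)\widehat{y}(z)=(1+\gamma z^{\alpha-1})\widehat f(z)$, which on inversion is exactly \eqref{BCauchy} with $y(0)=0$ (the $z^{\alpha-1}$ factor encoding the Riemann–Liouville normalization). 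I would present the direct verification as the main argument and mention the Laplace-transform computation as the conceptual check, after which uniqueness follows from linearity and the already-established well-posedness of the homogeneous problem \eqref{B}.
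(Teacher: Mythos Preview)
Your main line of argument---differentiate the convolution to get $y'(t)=f(t)+\int_0^t\partial_t B_\alpha(\lambda,t-\tau)f(\tau)\,d\tau$, then show $\partial_t^\alpha y(t)=\int_0^t(\partial_t^\alpha B_\alpha)(\lambda,t-\tau)f(\tau)\,d\tau$ via Fubini and a change of variables, and conclude using $LB_\alpha=0$---is exactly the paper's proof. The boundary-term worry you raise does not materialize: after Fubini and the substitution $\eta=\tau-\xi$, one has $\partial_t\int_0^t f(\xi)\big[\int_0^{t-\xi}(t-\xi-\eta)^{-\alpha}B_\alpha(\lambda,\eta)\,d\eta\big]d\xi$, and the Leibniz boundary contribution at $\xi=t$ vanishes because the inner integral is over $[0,0]$; no $\omega_{1-\alpha}$-type term appears and nothing needs to cancel.

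One correction to your side remark: the Laplace transform of $B_\alpha(\lambda,\cdot)$ read off from \eqref{B} is $\widehat{B_\alpha}(\lambda,z)=\dfrac{1}{z+\lambda+\lambda\gamma z^\alpha}$, not $\dfrac{1+\gamma z^{\alpha-1}}{z+\lambda+\lambda\gamma z^\alpha}$ (the latter is the transform of $A_\alpha$ from Section~7). With the correct formula the Laplace check reads simply $(z+\lambda+\lambda\gamma z^\alpha)\widehat y(z)=\widehat f(z)$, which indeed inverts to \eqref{BCauchy} with $y(0)=0$.
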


\begin{proof} Since $B_\alpha(\lambda, 0)=1$, then
	\[
	\partial_t y(t)= f(t) + \int\limits_0^t\partial_t B_\alpha(\lambda, t-\tau) f(\tau) d\tau.
	\]
On the other hand
\[
\partial_t^\alpha y(t)=\frac{1}{\Gamma(1-\alpha)}\partial_t \int\limits_0^t (t-\tau)^{-\alpha} \int\limits_0^\tau B_\alpha(\lambda, \tau-\xi) f(\xi) d\xi d \tau=
\]
(change the order of integration)
\[
=\frac{1}{\Gamma(1-\alpha)}\partial_t \int\limits_0^t f(\xi)  \int\limits_\xi^t (t-\tau)^{-\alpha}B_\alpha(\lambda, \tau-\xi) d\tau d\xi=
\]
(change of variables: $\tau-\xi=\eta,\, \tau=\xi+\eta,\, d\tau=d\eta$)
\[
=\frac{1}{\Gamma(1-\alpha)}\partial_t \int\limits_0^t f(\xi)  \int\limits_0^{t-\xi} (t-(\xi+\eta))^{-\alpha} B_\alpha(\lambda, \eta) d \eta d \xi=
\]	
\[
=\int\limits_0^t f(\xi)\frac{1}{\Gamma(1-\alpha)}\partial_t \int\limits_0^{t-\xi} (t-(\xi+\eta))^{-\alpha} B_\alpha(\lambda, \eta) d \eta d \xi=\int\limits_0^t f(\xi)\partial_t^\alpha B_\alpha(\lambda, t-\xi) d\xi.
\]
Thus, given that $B_\alpha(\lambda, t)$ is a solution to the Cauchy problem (\ref{B}), we obtain
\[
L y(t)= f(t) + \int\limits_0^t f(\xi) L B_\alpha(\lambda, \cdot-\xi) d\xi = f(t).
\]
	\end{proof}

\begin{cor}\label{BazhEquationIN} The solution of the Cauchy problem
	\begin{equation}\label{BCauchyIN}
		y'(t)+\lambda (1+\gamma \partial_t^\alpha)y(t)=f(t), \,\, t>0,\,\,\lambda>0,\,\, y(0)=y_0,
	\end{equation}
	has the form
	\begin{equation}\label{BCauchySolutionIN}
		y(t)=y_0 B_\alpha(\lambda, t) + \int\limits_0^t B_\alpha(\lambda, t-\tau) f(\tau) d \tau.	
	\end{equation}
\end{cor}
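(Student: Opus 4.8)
The plan is to reduce this to the two facts just established by exploiting the linearity of the problem. I would write the candidate solution as a sum $y = y_1 + y_2$, where $y_1(t) = y_0 B_\alpha(\lambda, t)$ accounts for the initial datum and $y_2(t) = \int_0^t B_\alpha(\lambda, t-\tau) f(\tau)\, d\tau$ accounts for the source term, and then verify that this sum meets all the requirements of the Cauchy problem (\ref{BCauchyIN}).

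First I would check $y_1$. Since the operator $L$ from (\ref{B}) is linear and $y_0$ is a constant scalar, $L y_1(t) = y_0\, L B_\alpha(\lambda, t) = 0$ for $t > 0$ because $B_\alpha(\lambda, \cdot)$ solves (\ref{B}); moreover $y_1(0) = y_0 B_\alpha(\lambda, 0) = y_0$ by part (1) of Lemma \ref{Bazh}. Then I would invoke Lemma \ref{BazhEquation} directly for $y_2$: it satisfies $y_2'(t) + \lambda(1 + \gamma \partial_t^\alpha) y_2(t) = f(t)$ with $y_2(0) = 0$. Adding the two identities and using linearity of $\partial_t^\alpha$ once more, $y = y_1 + y_2$ satisfies the inhomogeneous equation on $(0, T]$ and $y(0) = y_0$, which shows that (\ref{BCauchySolutionIN}) is a solution.

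For uniqueness I would take the difference $z$ of two solutions; it solves the homogeneous equation with $z(0) = 0$, and the representation of Lemma \ref{BazhEquation} with $f \equiv 0$ (equivalently, the Laplace-transform argument for this linear fractional ODE) yields $z \equiv 0$.

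I do not expect a genuine obstacle here — this really is a corollary. The only points deserving a line of care are that $\partial_t^\alpha$ distributes over the constant multiple $y_0 B_\alpha$ and over the sum $y_1 + y_2$ (true because it is a linear operator on the function class in question), and that each summand is continuous up to $t = 0$ so that the initial condition is attained in the classical sense; both follow from the regularity of $B_\alpha(\lambda, \cdot)$ recorded in \cite{Bazh} and already used in the proof of Lemma \ref{BazhEquation}.
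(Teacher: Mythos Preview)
Your argument is correct and is precisely the superposition the paper has in mind: the corollary is stated without proof, immediately after Lemma~\ref{BazhEquation} and the definition of $B_\alpha$ via (\ref{B}), so the intended justification is exactly the linear split $y=y_0 B_\alpha(\lambda,\cdot)+\int_0^t B_\alpha(\lambda,t-\tau)f(\tau)\,d\tau$ you carry out.
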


We also need the following properties of function $B_\alpha (\lambda, t)$.

\begin{lem}\label{B'}
	There is a constant $C>0$, such that
	\[
	|\partial_t B_\alpha (\lambda, t)| \leq \frac{C}{\lambda\,t^{2-\alpha} },\,\, t>0.
	\]
\end{lem}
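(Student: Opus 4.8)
The plan is to estimate $\partial_t B_\alpha(\lambda,t)$ directly from the subordination identity (\ref{BInt}). First I would differentiate under the integral sign, obtaining
\[
\partial_t B_\alpha(\lambda,t) = -\int_0^\infty r\, e^{-rt}\, b_\alpha(\lambda,r)\, dr .
\]
This interchange is legitimate for every fixed $t>0$: from the explicit formula for $b_\alpha$ one reads off $b_\alpha(\lambda,r)=O(r^\alpha)$ as $r\to 0^+$ (the denominator tends to $\lambda^2$) and $b_\alpha(\lambda,r)=O(r^{\alpha-2})$ as $r\to +\infty$ (the $(-r+\cdots)^2$ term dominates the denominator since $\alpha<1$), so on a neighbourhood of $t$ the integrand $r\,e^{-rt}b_\alpha(\lambda,r)$ is dominated by a fixed integrable function, and dominated convergence applies. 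Since $\sin\alpha\pi>0$ for $\alpha\in(0,1)$, the kernel $b_\alpha(\lambda,r)$ is nonnegative, whence
\[
|\partial_t B_\alpha(\lambda,t)| = \int_0^\infty r\, e^{-rt}\, b_\alpha(\lambda,r)\, dr
\]
(in passing this reproves assertion (2) of Lemma \ref{Bazh}).

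The heart of the argument is a crude pointwise bound on $b_\alpha$. Discarding the first (nonnegative, squared) summand in the denominator of $b_\alpha$ and keeping only $(\lambda\gamma r^\alpha\sin\alpha\pi)^2$, I get
\[
b_\alpha(\lambda,r)\le \frac{\gamma}{\pi}\cdot\frac{\lambda r^\alpha\sin\alpha\pi}{\big(\lambda\gamma r^\alpha\sin\alpha\pi\big)^2}=\frac{1}{\pi\gamma\sin\alpha\pi}\cdot\frac{1}{\lambda\, r^\alpha},\qquad r>0 .
\]
Substituting this into the integral and using the standard evaluation $\int_0^\infty r^{1-\alpha}e^{-rt}\,dr=\Gamma(2-\alpha)\,t^{\alpha-2}$ (change of variable $u=rt$; note $\Gamma(2-\alpha)<\infty$ because $2-\alpha\in(1,2)$) yields
\[
|\partial_t B_\alpha(\lambda,t)|\le \frac{1}{\pi\gamma\sin\alpha\pi}\cdot\frac{1}{\lambda}\int_0^\infty r^{1-\alpha}e^{-rt}\,dr=\frac{\Gamma(2-\alpha)}{\pi\gamma\sin\alpha\pi}\cdot\frac{1}{\lambda\, t^{2-\alpha}} ,
\]
so the claim holds with $C=\Gamma(2-\alpha)/(\pi\gamma\sin\alpha\pi)$.

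I do not expect any real obstacle: the only slightly delicate point is justifying the differentiation under the integral sign, which as noted is immediate from the elementary $r\to 0^+$ and $r\to+\infty$ asymptotics of $b_\alpha$. One could alternatively avoid even that by differentiating the Cauchy problem (\ref{B}) in $t$ and reusing Lemma \ref{Bazh}, but going through (\ref{BInt}) is the cleanest route and has the bonus of producing the constant explicitly.
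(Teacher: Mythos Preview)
Your proposal is correct and follows essentially the same route as the paper: differentiate the subordination formula (\ref{BInt}) under the integral sign, drop the first squared term in the denominator of $b_\alpha$ to get $b_\alpha(\lambda,r)\le (\pi\gamma\lambda\sin\alpha\pi)^{-1}r^{-\alpha}$, and evaluate $\int_0^\infty r^{1-\alpha}e^{-rt}\,dr=\Gamma(2-\alpha)t^{\alpha-2}$. If anything, you are more careful than the paper, which does not pause to justify the interchange of derivative and integral.
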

\begin{proof} From (\ref{BInt}) we have
	\[
	\partial_t B_\alpha (\lambda, t)=-\int\limits_0^\infty r e^{-rt} b_\alpha(\lambda, r) dr.
	\]
	Therefore, by the definition of $b_\alpha$,
	\[
	|\partial_t B_\alpha (\lambda, t)|\leq \frac{\gamma}{\pi} \int\limits_0^\infty 
	\frac{\lambda r^\alpha \sin \alpha \pi}{(\lambda \gamma r^\alpha \sin \alpha \pi )^2}\,r  e^{-rt}  dr= \frac{1}{\gamma\pi\lambda \sin \alpha \pi } \int\limits_0^\infty r^{1-\alpha} e^{-rt} dr=
	\]	
	(change of variables: $\tau=rt,\, d \tau= tdr$)
	\[
	=\frac{t^{\alpha-2}}{\gamma\pi\lambda \sin \alpha \pi } \int\limits_0^\infty \tau^{1-\alpha} e^{-\tau} d\tau=  \frac{t^{\alpha-2}}{\gamma\pi\lambda \sin \alpha \pi } \Gamma(2-\alpha)= \frac{C}{\lambda\,t^{2-\alpha} }.
	\]
	
\end{proof}

One can also obtain an estimate for $\partial_t B_\alpha$ with a smaller singularity at $t=0$.

\begin{lem}\label{B'New}
	There is a constant $C>0$, such that
	\[
	|\partial_t B_\alpha (\lambda, t)| \leq C \frac{  \lambda}{t^{\alpha} },\,\, t>0.
	\]
\end{lem}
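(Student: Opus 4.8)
The plan is to read $\partial_t B_\alpha$ off the defining Cauchy problem~(\ref{B}) and to estimate the resulting two terms using only the properties of $B_\alpha$ collected in Lemma~\ref{Bazh}. Since $B_\alpha(\lambda,\cdot)$ solves $y'+\lambda(1+\gamma\partial_t^\alpha)y=0$,
\[
\partial_t B_\alpha(\lambda,t)=-\lambda B_\alpha(\lambda,t)-\gamma\lambda\,\partial_t^\alpha B_\alpha(\lambda,t),
\]
and by assertion (2) of Lemma~\ref{Bazh} the left-hand side is negative, so $|\partial_t B_\alpha(\lambda,t)|=\lambda B_\alpha(\lambda,t)+\gamma\lambda\,\partial_t^\alpha B_\alpha(\lambda,t)$. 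Assertion (1) of Lemma~\ref{Bazh} gives $B_\alpha(\lambda,t)<1$, so the whole matter reduces to an upper bound for $\partial_t^\alpha B_\alpha(\lambda,t)$.

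For that I would use $B_\alpha(\lambda,0)=1$ together with the standard relation between the Riemann--Liouville and the Caputo derivatives,
\[
\partial_t^\alpha B_\alpha(\lambda,t)=\omega_{1-\alpha}(t)\,B_\alpha(\lambda,0)+\int_0^t\omega_{1-\alpha}(t-s)\,\partial_s B_\alpha(\lambda,s)\,ds ,
\]
which is obtained by integrating by parts in $\frac{d}{dt}\int_0^t\omega_{1-\alpha}(t-s)B_\alpha(\lambda,s)\,ds$, using $\omega_{2-\alpha}(0)=0$. The point requiring care is that $B_\alpha(\lambda,\cdot)$ be absolutely continuous up to $t=0$; this follows from (\ref{BInt}), since $\int_0^T|\partial_t B_\alpha(\lambda,t)|\,dt=\int_0^\infty b_\alpha(\lambda,r)(1-e^{-rT})\,dr\le\int_0^\infty b_\alpha(\lambda,r)\,dr=B_\alpha(\lambda,0)=1$, so $\partial_t B_\alpha(\lambda,\cdot)\in L_1(0,T)$ and the integration by parts (first on $[\varepsilon,t]$, then letting $\varepsilon\to0^+$) is legitimate. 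Since $\omega_{1-\alpha}>0$ and, by Lemma~\ref{Bazh}(2), $\partial_s B_\alpha(\lambda,s)<0$, the integral term is non-positive, whence $\partial_t^\alpha B_\alpha(\lambda,t)\le\omega_{1-\alpha}(t)=t^{-\alpha}/\Gamma(1-\alpha)$.

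Putting the two bounds together,
\[
|\partial_t B_\alpha(\lambda,t)|\le\lambda+\frac{\gamma\lambda}{\Gamma(1-\alpha)}\,t^{-\alpha}=\frac{\lambda}{t^\alpha}\Bigl(t^\alpha+\frac{\gamma}{\Gamma(1-\alpha)}\Bigr),
\]
which on the time interval $(0,T]$ relevant for problem~(\ref{probMain}) is $\le C\lambda\,t^{-\alpha}$ with $C=T^\alpha+\gamma/\Gamma(1-\alpha)$, as claimed. The main (and essentially only) obstacle is the justification of the Caputo representation above — absolute continuity of $B_\alpha(\lambda,\cdot)$ near $t=0$ and the vanishing of the boundary term — after which the estimate is a one-line substitution; one should also keep in mind that the constant produced in this way depends on the length of the time interval under consideration. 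A more computational alternative would be to estimate $|\partial_t B_\alpha(\lambda,t)|=\int_0^\infty r e^{-rt}b_\alpha(\lambda,r)\,dr$ directly from (\ref{BInt}), splitting the $r$-integral around $r=\lambda$, but that route is messier.
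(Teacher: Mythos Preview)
Your argument is correct on bounded time intervals and takes a genuinely different route from the paper's. The paper works directly with the integral representation~(\ref{BInt}): it writes $|\partial_t B_\alpha(\lambda,t)|=\int_0^\infty r\,e^{-rt}b_\alpha(\lambda,r)\,dr$, bounds the denominator of $b_\alpha$ from below by $r^2$, and computes $\frac{\gamma\lambda\sin\alpha\pi}{\pi}\int_0^\infty r^{\alpha-1}e^{-rt}\,dr=\frac{\gamma\lambda\,\Gamma(\alpha)\sin\alpha\pi}{\pi\,t^{\alpha}}$, obtaining a constant independent of $T$. You instead read $\partial_t B_\alpha$ off the Cauchy problem~(\ref{B}), reduce the matter to bounding $\partial_t^\alpha B_\alpha$, and control that by $\omega_{1-\alpha}(t)$ via the Riemann--Liouville/Caputo identity combined with the monotonicity assertion of Lemma~\ref{Bazh}(2); the integral representation~(\ref{BInt}) enters only to justify absolute continuity of $B_\alpha(\lambda,\cdot)$ near $t=0$, which you handle carefully. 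What you gain is a clean argument that uses only the qualitative properties collected in Lemma~\ref{Bazh}; what you lose, as you correctly flag, is that your constant $T^\alpha+\gamma/\Gamma(1-\alpha)$ grows with the time horizon, so you prove the bound on $(0,T]$ rather than for all $t>0$ as the lemma is stated --- harmless here, since every use of the lemma in the paper takes place on $[0,T]$. The ``messier'' alternative you sketch at the end is in fact essentially the paper's approach, though the paper does not split the $r$-integral: it simply replaces the full denominator of $b_\alpha$ by $r^2$ in one stroke.
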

\begin{proof} Again by the definition of $b_\alpha$ one can obtain
	\[
	|\partial_t B_\alpha (\lambda, t)|\leq \frac{\gamma}{\pi} \int\limits_0^\infty 
	\frac{\lambda r^\alpha \sin \alpha \pi}{r^2}\,r  e^{-rt}  dr=\frac{\lambda \,\gamma\, \Gamma (\alpha)\,\sin \alpha \pi }{\pi t^\alpha}.
	\]

	\end{proof}

Now, by combining these two estimates, we can obtain the following statement, which we also apply further.

\begin{lem}\label{B'New_n}
	For any integer $n= 2^j,\, j=1,2,\cdots,$ there is a constant $C_n>0$, such that
	\[
	|\partial_t B_\alpha (\lambda, t)| \leq \frac{C_n  \lambda^{\frac{1}{n}}}{t^{1-\frac{1-\alpha}{n}} },\,\, t>0.
	\]
\end{lem}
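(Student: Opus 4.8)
The plan is to obtain the bound by interpolating between the two estimates already available, namely Lemma~\ref{B'} and Lemma~\ref{B'New}. No function-theoretic interpolation is needed: the only facts used are the elementary implications $0\le X\le Y\Rightarrow X^{a}\le Y^{a}$ for $a>0$ and $X^{a}X^{b}=X^{a+b}$, applied to $X=|\partial_t B_\alpha(\lambda,t)|\ge 0$.

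Concretely, I would fix $n=2^{j}$ with $j\ge 1$ and set $\theta=\frac{n+1}{2n}$. Since $n\ge 2$, we have $\theta\in(\tfrac12,\tfrac34]$ and $1-\theta=\frac{n-1}{2n}\in[\tfrac14,\tfrac12)$, so both exponents are strictly positive and the following manipulation is legitimate. Writing $|\partial_t B_\alpha|=|\partial_t B_\alpha|^{1-\theta}\,|\partial_t B_\alpha|^{\theta}$, I would bound the first factor by the $(1-\theta)$-th power of the inequality in Lemma~\ref{B'} and the second factor by the $\theta$-th power of the inequality in Lemma~\ref{B'New}, which gives
\[
|\partial_t B_\alpha(\lambda,t)|\le C_n\cdot\frac{1}{\lambda^{1-\theta}\,t^{(1-\theta)(2-\alpha)}}\cdot\frac{\lambda^{\theta}}{t^{\theta\alpha}}=\frac{C_n\,\lambda^{2\theta-1}}{t^{(1-\theta)(2-\alpha)+\theta\alpha}},
\]
where $C_n>0$ collects the constants of the two previous lemmas raised to the powers $1-\theta$ and $\theta$.

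It then only remains to check that the exponents come out as claimed. The choice $\theta=\frac{n+1}{2n}$ gives $2\theta-1=\frac1n$, so the power of $\lambda$ is $\lambda^{1/n}$, while a short computation,
\[
(1-\theta)(2-\alpha)+\theta\alpha=\frac{(n-1)(2-\alpha)+(n+1)\alpha}{2n}=\frac{2n-2+2\alpha}{2n}=1-\frac{1-\alpha}{n},
\]
reproduces exactly the power of $t$ in the statement, finishing the proof. I do not expect any real obstacle here: the argument is essentially a one-line interpolation, and the only care required is the bookkeeping of the $\lambda$- and $t$-exponents together with the observation that $\theta$ and $1-\theta$ are positive so that the two inequalities may be raised to these powers. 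The hypothesis $n=2^{j}$ is not needed for the argument itself---any integer $n\ge1$ works, with $n=1$ simply returning Lemma~\ref{B'New}---and is imposed only because these are the values used later; for them the weights $\frac{n\pm1}{2n}$ are dyadic, so the same bound can equivalently be built up by repeatedly taking geometric means of already established estimates (for instance $\sqrt{(\text{Lemma~\ref{B'}})\cdot(\text{Lemma~\ref{B'New}})}$ yields a $Ct^{-1}$ bound, and one halves the index at each further step).
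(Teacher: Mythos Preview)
Your argument is correct. Both you and the paper prove the lemma by interpolating between Lemma~\ref{B'} and Lemma~\ref{B'New}, but the executions differ. The paper proceeds iteratively: it first multiplies the two bounds to obtain the intermediate estimate $|\partial_t B_\alpha(\lambda,t)|\le C/t$ (this is (\ref{Bn})), and then repeatedly takes the geometric mean of the current estimate with Lemma~\ref{B'New}, halving the exponent of $\lambda$ at each step; this is why the paper restricts to $n=2^{j}$. You instead choose the interpolation weight $\theta=(n+1)/(2n)$ at the outset and reach the desired bound in a single step. Your route is a little cleaner and, as you observe, works for every integer $n\ge 1$, so the dyadic hypothesis is seen to be an artifact of the iterative scheme rather than a genuine constraint. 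The closing remark in your proposal---that the same bound can be recovered by repeated geometric means---is exactly the paper's argument.
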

\begin{proof} Apply Lemmas \ref{B'} and \ref{B'New} to get
	\begin{equation}\label{Bn}
	|\partial_t B_\alpha (\lambda, t)|^2 \leq C\frac{1}{\lambda t^{2-\alpha}} \cdot \frac{ \lambda}{t^\alpha}= \frac{C}{t^2},\,\, t>0.
	\end{equation}
	This estimate and Lemma \ref{B'New} imply
		\[
	|\partial_t B_\alpha (\lambda, t)|^2 \leq C\frac{1}{ t}\cdot \frac{ \lambda}{t^\alpha}= \frac{C \lambda}{t^{1+\alpha}},\,\, t>0,
	\]
	or
	\[
	|\partial_t B_\alpha (\lambda, t)| \leq  \frac{C \lambda^{\frac{1}{2}}}{t^{1-\frac{1-\alpha}{2}}},\,\, t>0.
	\]
	Now, using this estimate and (\ref{Bn}), we obtain
	\[
	|\partial_t B_\alpha (\lambda, t)| \leq  \frac{C \lambda^{\frac{1}{4}}}{t^{1-\frac{1-\alpha}{4}}},\,\, t>0.
	\]
	
	Repeating this process gives the assertion of the lemma.
	
	\end{proof}

In the future, instead of $\lambda$, we will have eigenvalues $\lambda_k$ of operator $A$. The next lower bound for $B_\alpha (\lambda_k, t)$ was obtained in  Luc, N.H., Tuan,
N.H., Kirane, M., Thanh, D.D.X \cite{Luc1}. For the convenience of the reader, we present this proof.
\begin{lem}\label{BLower}The following estimate holds for all $t \in [0, T]$ and $k\geq 1$:
	\[
	B_\alpha (\lambda_k, t)\geq \frac{C(\alpha, \gamma, \lambda_1)}{\lambda_k},
	\]
	where 
	\[
	C(\alpha, \gamma, \lambda_1)=\frac{\gamma \sin \alpha \pi}{4}\int\limits_0^\infty \frac{r^\alpha e^{-rT}}{\frac{r^2}{\lambda_1^2}+ \gamma^2 r^{2\alpha}+1} dr.
	\]
		\end{lem}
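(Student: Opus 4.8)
The plan is to exploit the integral representation \eqref{BInt} of $B_\alpha(\lambda_k, t)$ together with the explicit form of the density $b_\alpha(\lambda_k, r)$, and then bound the denominator from above by something proportional to $\lambda_k^2$. First I would write, using \eqref{BInt} and the fact that the integrand is nonnegative,
\[
B_\alpha(\lambda_k, t) = \int\limits_0^\infty e^{-rt} b_\alpha(\lambda_k, r)\, dr \geq \int\limits_0^\infty e^{-rT} b_\alpha(\lambda_k, r)\, dr,
\]
which is valid for all $t \in [0,T]$ since $e^{-rt} \geq e^{-rT}$ for $r \geq 0$. Then I would substitute the definition of $b_\alpha$ and factor $\lambda_k^2$ out of the denominator:
\[
b_\alpha(\lambda_k, r) = \frac{\gamma}{\pi} \frac{\lambda_k r^\alpha \sin \alpha\pi}{\lambda_k^2\left[\left(\frac{-r}{\lambda_k} + \gamma r^\alpha \cos\alpha\pi + 1\right)^2 + \gamma^2 r^{2\alpha}\sin^2\alpha\pi\right]} = \frac{\gamma \sin\alpha\pi}{\pi \lambda_k} \cdot \frac{r^\alpha}{\left(\frac{-r}{\lambda_k} + \gamma r^\alpha \cos\alpha\pi + 1\right)^2 + \gamma^2 r^{2\alpha}\sin^2\alpha\pi}.
\]

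The key step is to bound the bracketed denominator from above by a fixed multiple of $\frac{r^2}{\lambda_1^2} + \gamma^2 r^{2\alpha} + 1$ uniformly in $k$. Using $\lambda_k \geq \lambda_1$ we get $\frac{r^2}{\lambda_k^2} \leq \frac{r^2}{\lambda_1^2}$. Expanding $\left(\frac{-r}{\lambda_k} + \gamma r^\alpha\cos\alpha\pi + 1\right)^2$ and using the elementary inequality $(a+b+c)^2 \leq 3(a^2+b^2+c^2)$ — or, more carefully, grouping the cross terms and absorbing them — one obtains
\[
\left(\frac{-r}{\lambda_k} + \gamma r^\alpha\cos\alpha\pi + 1\right)^2 + \gamma^2 r^{2\alpha}\sin^2\alpha\pi \leq C_0\left(\frac{r^2}{\lambda_1^2} + \gamma^2 r^{2\alpha} + 1\right)
\]
for an absolute constant $C_0$ (one can check $C_0 = 4$ works, which is presumably the source of the $4$ in the denominator of $C(\alpha,\gamma,\lambda_1)$). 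Substituting this upper bound for the denominator only decreases the integrand, so
\[
B_\alpha(\lambda_k, t) \geq \frac{\gamma \sin\alpha\pi}{\pi C_0 \lambda_k} \int\limits_0^\infty \frac{r^\alpha e^{-rT}}{\frac{r^2}{\lambda_1^2} + \gamma^2 r^{2\alpha} + 1}\, dr = \frac{C(\alpha,\gamma,\lambda_1)}{\lambda_k},
\]
after matching constants. Finally I would note that the integral defining $C(\alpha,\gamma,\lambda_1)$ is finite: near $r=0$ the integrand behaves like $r^\alpha$ which is integrable since $\alpha > 0$, and for large $r$ the factor $e^{-rT}$ guarantees rapid decay (even without it, the denominator grows like $r^2$ while the numerator is $r^\alpha$ with $\alpha < 1$, so convergence at infinity is automatic for $T = 0$ as well, though here $T > 0$). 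This also shows $C(\alpha,\gamma,\lambda_1) > 0$ since the integrand is strictly positive.

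The main obstacle is pinning down the constant in the denominator bound so that it matches the stated $C(\alpha,\gamma,\lambda_1)$ exactly; the cross term $2\cdot\frac{-r}{\lambda_k}\cdot(\gamma r^\alpha\cos\alpha\pi + 1)$ has an indefinite sign and must be controlled, e.g. via $2|xy| \leq x^2 + y^2$ applied judiciously, to arrive at the clean factor of $4$. Everything else is a routine manipulation of the integral representation, and the only analytic input needed — positivity of $b_\alpha$ and the representation \eqref{BInt} — is already recorded in the excerpt.
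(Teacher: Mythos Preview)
Your proposal is correct and follows essentially the same route as the paper: replace $e^{-rt}$ by $e^{-rT}$, apply $(a+b+c)^2\le 3(a^2+b^2+c^2)$ to the three summands in the denominator (which, together with the remaining $\gamma^2 r^{2\alpha}\sin^2\alpha\pi$ term, gives exactly the factor $4$ without any separate cross-term analysis), use $\lambda_k\ge\lambda_1$ to pass to $r^2/\lambda_1^2$, and then note that the resulting integral is finite by comparison with $\int_0^\infty r^\alpha e^{-rT}\,dr$. Your concern in the last paragraph about controlling the cross term is unnecessary---the inequality $(a+b+c)^2\le 3(a^2+b^2+c^2)$ already absorbs all cross terms, so $C_0=4$ drops out immediately.
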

\begin{proof} Since the modulo of trigonometric functions does not exceed one and $(a+b+c)^2\leq 3(a^2+b^2+c^2)$, then 
	\[
(-r+\lambda_k\gamma r^\alpha \cos \alpha \pi +\lambda_k)^2+(\lambda_k \gamma r^\alpha \sin \alpha \pi )^2\leq 3(r^2+ \lambda_k^2\gamma^2 r^{2\alpha}+\lambda_k^2)+\lambda_k^2\gamma^2 r^{2\alpha}\leq	
	\]
	\[
\leq 4\lambda_k^2(\frac{r^2}{\lambda_1^2}+ \gamma^2 r^{2\alpha}+1).
	\]
	Therefore, (\ref{BInt}) implies
\[
B_\alpha (\lambda_k, t)\geq \frac{\gamma \sin \alpha \pi}{4\lambda_k}\int\limits_0^\infty \frac{r^\alpha e^{-rT}}{\frac{r^2}{\lambda_1^2}+ \gamma^2 r^{2\alpha}+1} dr.
\]

It should be noted that the improper integral converges. Indeed,
\[
\int\limits_0^\infty \frac{r^\alpha e^{-rT}}{\frac{r^2}{\lambda_1^2}+ \gamma^2 r^{2\alpha}+1} dr\leq \int\limits_0^\infty r^\alpha e^{-rT}dr=	T^{-(\alpha+1)}\int\limits_0^\infty \tau^\alpha e^{-\tau}d\tau =	T^{-(\alpha+1)} \Gamma(\alpha+1).
\]	
	\end{proof}

\section{Forward	problem }

	Consider the following Cauchy problem:
		\begin{equation}\label{prob2a}
		\left\{
		\begin{aligned}
			&\partial_t v(t)  + (1+\gamma\, \partial_t^\alpha)A v(t) = f(t),\quad 0< t \leq T;\\
			&v(0) =\varphi,
		\end{aligned}
		\right.
	\end{equation}
where $\varphi$ is a given vector of $H$. We also call this problem \textit{the forward problem.}

\begin{thm}\label{Thprob2a}
	Let $\varphi\in H$ and $f(t) \in C ([0,T];D(A^\varepsilon))$ for some $\varepsilon\in (0,1)$. Then problem (\ref{prob2a}) has a unique solution and this solution has the representation
	\begin{equation}\label{v}
		v(t)= \sum\limits_{k=1}^\infty B_\alpha (\lambda_k, t) \varphi_k v_k+ \sum\limits_{k=1}^\infty
		\left[\int\limits_{0}^tB_\alpha (\lambda_k, t-\tau)f_k(\tau)d\tau\right] v_k.
	\end{equation}
	There are  constants $C$ and $ C_\varepsilon> 0 $ such that the
	following coercive type inequality holds:
	\begin{equation}\label{estimatev}
		||\partial_t v(t)||^2 + ||\partial_t^\alpha A v(t)||^2 \leq C t^{-2} ||\varphi||^2+ C_\varepsilon
		\max\limits_{t\in[0,T]} ||f||_\varepsilon^2,\quad 0 < t \leq T.
	\end{equation}
Moreover, there is a  constant $C_T$, depending on $T$, such that
\begin{equation}\label{estimatevA1}
	|| v(T)||_1^2 \leq C_T \big(||\varphi||^2+ 
	\max\limits_{t\in[0,T]} ||f||^2\big).
\end{equation}
\end{thm}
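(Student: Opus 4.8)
The method of proof is Fourier's: I expand a putative solution as $v(t)=\sum_{k}v_k(t)v_k$ with $v_k(t)=(v(t),v_k)$. Pairing the equation of (\ref{prob2a}) with $v_k$, using self-adjointness of $A$ and the fact that taking a Fourier coefficient commutes with $\partial_t$ and (through its defining convolution) with $\partial_t^\alpha$, one finds that $v_k$ solves the scalar Cauchy problem $v_k'(t)+\lambda_k(1+\gamma\partial_t^\alpha)v_k(t)=f_k(t)$, $v_k(0)=\varphi_k$. By Corollary \ref{BazhEquationIN} its unique continuous solution is $v_k(t)=\varphi_k B_\alpha(\lambda_k,t)+\int_0^t B_\alpha(\lambda_k,t-\tau)f_k(\tau)\,d\tau$, which is precisely the series (\ref{v}); and since the homogeneous scalar problem admits only the trivial continuous solution, this reasoning also gives uniqueness. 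It then remains to check that (\ref{v}) genuinely defines a solution in the sense of Definition \ref{def} satisfying the asserted bounds.

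For convergence and continuity I estimate tails. Using $0<B_\alpha(\lambda_k,t)<1$ (Lemma \ref{Bazh}(1)), the Cauchy--Schwarz inequality, and $\int_0^T B_\alpha(\lambda_k,s)\,ds\le\lambda_k^{-1}$ (Lemma \ref{Bazh}(4)), one gets $\sum_{k>N}|v_k(t)|^2\le 2\sum_{k>N}|\varphi_k|^2+2\lambda_1^{-1}\int_0^T\sum_{k>N}|f_k(\tau)|^2\,d\tau$, which tends to $0$ uniformly in $t\in[0,T]$ (the second term by dominated convergence, since $\|f(\tau)\|^2$ is bounded on $[0,T]$); hence $v\in C([0,T];H)$ and $v(0)=\varphi$. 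Analogous tails on $[\delta,T]$, $\delta>0$ --- also using the uniform-in-$t$ convergence of the Fourier expansions of $f$ in $H$ and in $D(A^\varepsilon)$, valid because $f([0,T])$ is compact in these spaces --- show that $\sum_k\lambda_k v_k(t)v_k$ and $\sum_k v_k'(t)v_k$ converge uniformly on $[\delta,T]$, so $Av,\partial_t v\in C((0,T);H)$.

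For the estimates, differentiating the convolution (using $B_\alpha(\lambda_k,0)=1$) gives $v_k'(t)=\varphi_k\partial_t B_\alpha(\lambda_k,t)+f_k(t)+\int_0^t\partial_t B_\alpha(\lambda_k,t-\tau)f_k(\tau)\,d\tau$. In $\|\partial_t v(t)\|^2=\sum_k|v_k'(t)|^2$ the first term contributes $\le C t^{-2}\|\varphi\|^2$ via the bound $|\partial_t B_\alpha(\lambda_k,t)|\le C t^{-1}$ from (\ref{Bn}), the second gives $\|f(t)\|^2\le\max_{[0,T]}\|f\|^2$, and for the third --- the crucial one --- I pick $n=2^j$ with $1/n\le\varepsilon$, use Lemma \ref{B'New_n} to get the kernel bound $|\partial_t B_\alpha(\lambda_k,s)|\le C_n\lambda_k^{1/n}s^{-(1-(1-\alpha)/n)}$ (integrable at $s=0$), and estimate via Cauchy--Schwarz in $\tau$ together with $\lambda_k^{2/n}\le\lambda_1^{2/n-2\varepsilon}\lambda_k^{2\varepsilon}$ to obtain $\le C_\varepsilon\max_{[0,T]}\|f\|_\varepsilon^2$. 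For $Av(t)=\sum_k\lambda_k v_k(t)v_k$ the $\varphi$-part is $\le C t^{-2}\|\varphi\|^2$ via $\lambda_k B_\alpha(\lambda_k,t)\le C\min\{t^{-1},t^{\alpha-1}\}$ (Lemma \ref{Bazh}(3)), and the $f$-part is handled by Cauchy--Schwarz with Lemma \ref{Bazh}(4) and $\lambda_k B_\alpha(\lambda_k,s)\le C s^{\alpha-1}$, giving $C\int_0^t(t-\tau)^{\alpha-1}\|f(\tau)\|^2\,d\tau\le C_T\max_{[0,T]}\|f\|^2$. The equation then yields $\gamma\partial_t^\alpha Av=f-\partial_t v-Av$, whence $\|\partial_t^\alpha Av(t)\|^2\le 3\gamma^{-2}(\|f\|^2+\|\partial_t v\|^2+\|Av\|^2)$; together these give (\ref{estimatev}), and putting $t=T$ in the bound for $\|Av(t)\|^2$ gives (\ref{estimatevA1}) with $C_T$ absorbing the powers of $T$. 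To make the identity for $\partial_t^\alpha Av$ rigorous one notes that the scalar equations give $\partial_t^\alpha(\lambda_k v_k)=\gamma^{-1}(f_k-v_k'-\lambda_k v_k)$, that near $t=0$ one has $\|Av(t)\|\le C t^{\alpha-1}\|\varphi\|+C\|f\|$ so $\int_0^t\omega_{1-\alpha}(t-s)Av(s)\,ds$ converges as a Bochner integral, and that $\sum_k\partial_t^\alpha(\lambda_k v_k)(t)v_k$ converges uniformly on $[\delta,T]$; termwise differentiation then shows $\partial_t^\alpha Av\in C((0,T);H)$ and that $v$ solves the equation.

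I expect the main obstacle to be the convolution term in $\|\partial_t v\|$: the cheap bound $|\partial_t B_\alpha(\lambda_k,t)|\le C t^{-1}$ would destroy integrability of the kernel at $\tau=t$, while Lemma \ref{B'New} alone is too lossy in $\lambda$; one must use the interpolated estimate of Lemma \ref{B'New_n} and tune its exponent $1/n$ against both the smoothness index $\varepsilon$ of $f$ and the integrability exponent $1-(1-\alpha)/n<1$ of the time singularity --- this is exactly where the hypothesis $f\in C([0,T];D(A^\varepsilon))$ with $\varepsilon>0$ enters. A secondary technical point is legitimizing the termwise Riemann--Liouville differentiation, for which it matters that $\min\{t^{-1},t^{\alpha-1}\}=t^{\alpha-1}$ near $t=0$, so that $Av$ is only weakly singular there.
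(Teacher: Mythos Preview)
Your argument is correct and follows essentially the same route as the paper's proof: Fourier expansion, Corollary~\ref{BazhEquationIN} for the scalar problems, Parseval for the series estimates, the interpolated bound of Lemma~\ref{B'New_n} (with $1/n\le\varepsilon$) for the critical convolution term in $\|\partial_t v\|$, Lemma~\ref{Bazh}(3) for $\|Av(T)\|$, and the equation itself to recover $\partial_t^\alpha Av$. The only cosmetic differences are that the paper phrases the key step via the generalized Minkowski inequality rather than weighted Cauchy--Schwarz (these are equivalent here), and bounds $\|(1+\gamma\partial_t^\alpha)Av\|$ directly from the equation whereas you first bound $\|Av\|$ separately and then isolate $\gamma\partial_t^\alpha Av$; your version is slightly more explicit on this point and on the termwise justification of the Riemann--Liouville derivative.
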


As noted in the introduction, the Cauchy problem (\ref{prob2a}) in the case when $A$ is the Laplace operator in the $N$-dimensional domain ($N=1,2,3$) has been considered by many authors. So the authors of \cite{Bazh} establish the Sobolev regularity of the homogeneous problem for both smooth and nonsmooth initial data $\varphi$. In particular, the authors proved an estimate similar to (\ref{estimatev}) (see estimate (2.16), note that in this paper  $f(t)\equiv 0$).

Formula (\ref{v}) for solving problem (\ref{prob2a}) is formally given in the same article of Bazhlekova, Jin, Lazarov, and Zhou \cite{Bazh}, but since this paper is devoted to the study of a homogeneous problem, the question of for which $\varphi$ and $f(t)$ it gives a solution to problem (\ref{prob2a}) is not considered. Note that functions $\varphi$ and $f(t)$ must be such that, for example, the series (\ref{v}) allows term-by-term differentiation with respect to the variable $t$.

The formula is also contained in papers \cite{Duc}, equation (2.2), \cite{Luc1}, equation (2.6) and \cite{Luc2}, equation (8). Again, since these works are devoted to the study of other problems, the above issues were not considered.

\begin{proof}According to the Fourier method, we expand the functions $\varphi$ and $f(t)$ into a Fourier series in terms of the system of eigenfunctions $\{v_k\}$ with coefficients $\varphi_k$ and $f_k(t)=(f(t), v_k)$ correspondingly. The solution of problem (\ref{prob1a}) will be sought in the form of a series $\sum_k T_k(t) v_k$ with unknown coefficients $T_k(t)$.
	
	It is easy to verify that the functions $T_k(t)$ are solutions of the Cauchy problem (\ref{BCauchyIN}) with the right-hand side $f(t)=f_k(t)$, the initial condition $y_0=\varphi_k$ and with the parameter $\lambda= \lambda_k$. Therefore (see (\ref{BCauchySolution}))
	\[
	T_k(t)=B_\alpha (\lambda_k, t) \, \varphi_k  + \int\limits_{0}^tB_\alpha (\lambda_k, t-\tau)f_k(\tau)d\tau.
	\]
	
	Thus, series (\ref{v}) is a formal solution to problem (\ref{prob2a}). It remains to show that the series itself and the series obtained after differentiation also converge. Let us get started with this task.
	
	By Parseval's equality, the first assertion of the Lemma \ref{Bazh}, and Holder's inequality, we have	
	\[
\bigg|\bigg|\sum\limits_{k=1}^j \left[\int\limits_{0}^t B_\alpha (\lambda_k, t-\tau)f_k(\tau)d\tau\right] v_k\bigg|\bigg|^2= \sum\limits_{k=1}^j \left[\int\limits_{0}^t B_\alpha (\lambda_k, t-\tau)f_k(\tau)d\tau\right]^2\leq
	\]
	\[
\leq \sum\limits_{k=1}^j \left[\int\limits_{0}^t |f_k(\tau)|d\tau\right]^2\leq C \max_{0\leq t\leq T}||f(t)||^2.	
	\]
	Same way,
	\[
	\bigg|\bigg|\sum\limits_{k=1}^j B_\alpha (\lambda_k, t)\varphi_k v_k\bigg|\bigg|^2\leq C ||\varphi||^2.	
	\]
	Now consider the series after differentiation. Apply Parseval's equality to obtain
	\[
\bigg|\bigg|\sum\limits_{k=1}^j \left[\partial_t\int\limits_{0}^t B_\alpha (\lambda_k, t-\tau)f_k(\tau)d\tau\right] v_k\bigg|\bigg|^2=\bigg|\bigg|\sum\limits_{k=1}^j \left[f_k(t)-\int\limits_{0}^t \partial_t B_\alpha (\lambda_k, t-\tau)f_k(\tau)d\tau\right] v_k\bigg|\bigg|^2\leq \]	
	\[
	\leq  2\max_{0\leq t\leq T}||f(t)||^2+ 2\sum\limits_{k=1}^j \left[\int\limits_{0}^t\partial_t B_\alpha (\lambda_k, t-\tau)f_k(\tau)d\tau\right]^2.
	\]
Generalized Minkowski inequality and	Lemma \ref{B'New_n} imply
\[
\sum\limits_{k=1}^j \left[\int\limits_{0}^t\partial_t B_\alpha (\lambda_k, t-\tau)f_k(\tau)d\tau\right]^2\leq C_n \bigg( \int\limits_0^t \tau^{\frac{1-\alpha}{n}-1} \bigg(\sum\limits_{k=1}^j |\lambda_k^{\frac{1}{n}} f_k(\tau)|^2\bigg)^{\frac{1}{2}}d\tau\bigg)^2\leq
\]
(choose $n$ such that $\frac{1}{n}<\varepsilon$)
\[
\leq C_\varepsilon \max_{t\in[0,T]} ||f(t)||_\varepsilon.
\]

Similarly, apply Parseval's equality and (\ref{Bn}) to get
	\[
\bigg|\bigg|\sum\limits_{k=1}^j \partial_t B_\alpha (\lambda_k, t)\varphi_k v_k\bigg|\bigg|^2\leq \frac{C}{t^2} ||\varphi||^2.	
\]

Equation (\ref{prob1a}) implies
\[
||(1+\gamma \partial_t^\alpha) A v||^2\leq ||\partial_t v||^2 + ||f(t)||^2.
\]
Therefore, it follows from the above
\[
\bigg|\bigg|(1+\gamma \partial_t^\alpha) A \sum\limits_{k=1}^j
\left[\int\limits_{0}^tB_\alpha (\lambda_k, t-\tau)f_k(\tau)d\tau\right] v_k\bigg|\bigg|^2\leq C_\varepsilon \max_{t\in[0,T]} ||f(t)||_\varepsilon,
\]
and
\[
\bigg|\bigg|(1+\gamma \partial_t^\alpha) A \sum\limits_{k=1}^j
B_\alpha (\lambda_k, t)\varphi_k v_k\bigg|\bigg|^2\leq C(\frac{1}{t^2} ||\varphi||+||f(t)||).
\]
Note that $||f(t)||\leq ||f(t)||_\varepsilon$, $\varepsilon>0$.

Thus, it has been shown that the function defined by the series (\ref{v}) is indeed a solution to problem (\ref{prob1a}).

Obviously, (\ref{estimatev}) follows from the established estimates. 

The uniqueness of the solution to problem (\ref{prob1a}) can be proved by the standard technique based on
completeness of the set of eigenfunctions $\{v_k\}$ in $H$. Indeed, suppose the problem has two solutions $v^1$ and $v^2$. Then the difference $v=v^1-v^2$ is a solution of the homogeneous problem:
\[
	\left\{
	\begin{aligned}
		&\partial_t v(t)  + (1+\gamma\, \partial_t^\alpha)A v(t) = 0,\quad 0< t \leq T;\\
		&v(0) =0.
	\end{aligned}
	\right.
\]
Let $v(t)$ be any soltion of this problem. Consider the Fourier coefficients
 $T_k(t)=(v(t), v_k)$. It is not hard to see, that $T_k$ is a solution of the Cauchy problem
 \[
 \partial_t T_k(t)  + \lambda_k (1+\gamma\, \partial_t^\alpha) T_k(t) = 0,\quad 0< t \leq T,\,\, T_k(0)=0.
 \]
 Corollary \ref{BCauchySolutionIN} implies that $T_k(t)\equiv 0$ for all $k$. Since 
the set of eigenfunctions $\{v_k\}$ complete in $H$, then $v(t)\equiv 0$.

It remains to prove the estimate (\ref{estimatevA1}). Let $S_j(t)$ be the partial sum of the series (\ref{v}). Then
\[
	||A S_j(T)||\leq  \bigg|\bigg|\sum\limits_{k=1}^j \lambda_k B_\alpha (\lambda_k, T) \varphi_k v_k\bigg|\bigg|+ \bigg|\bigg|\sum\limits_{k=1}^j
\left[\int\limits_{0}^T \lambda_k B_\alpha (\lambda_k, t-\tau)f_k(\tau)d\tau\right] v_k\bigg|\bigg|.
\]
Apply Parseval's equality and estimate (3) of Lemma \ref{Bazh} to obtain
\[
 \bigg|\bigg|\sum\limits_{k=1}^j \lambda_k B_\alpha (\lambda_k, T) \varphi_k v_k\bigg|\bigg|^2\leq C_T \sum\limits_{k=1}^j |\varphi|^2\leq C_T ||\varphi||^2.
\]
Similarly,
\[
\bigg|\bigg|\sum\limits_{k=1}^j
\left[\int\limits_{0}^T \lambda_k B_\alpha (\lambda_k, t-\tau)f_k(\tau)d\tau\right] v_k\bigg|\bigg|^2\leq C_T \sum\limits_{k=1}^j
\left[\int\limits_{0}^T \tau^{-\alpha}f_k(\tau)d\tau\right]^2\leq 
\]
(apply generalized Minkowski inequality)
\begin{equation}\label{estimate_f}
\leq C_T \bigg(\int\limits_{0}^T \tau^{-\alpha} \bigg(\sum\limits_{k=1}^j|f_k(\tau)|^2\bigg)^{1/2}d\tau\bigg)^2\leq C_T \max_{t\in[0,T]} ||f(t)||^2.
\end{equation}
The last two estimates imply (\ref{estimatevA1}).

The theorem is completely proven.

\end{proof}

\section{ Backward problem}
In the present paragraph we consider the following \textit{backward} problem:
\begin{equation}\label{probBackward}
	\left\{
	\begin{aligned}
		&\partial_t \omega(t)  + (1+\gamma\, \partial_t^\alpha)A \omega(t) = f(t),\quad 0< t < T;\\
		&\omega(T) =\psi,
	\end{aligned}
	\right.
\end{equation}
where $\psi$ is a given vector of $H$. It should be specially emphasized that for the backward problem the parameter $\gamma$ in the Rayleigh-Stokes equation plays an important role. If this parameter is equal to zero, then we get the classical diffusion equation, for which the backward problem is strongly ill-posed: not for any $\psi\in H$ (even not for any $\psi\in D(A^k), \, k\geq 1$) there is a solution, and if it exists, then a small change in $\psi$ leads to a very strong change in the solution (see e.g. Chapter 8.2 of \cite{Kaban}). In the next Theorem \ref{backward}, we show that the parameter $\gamma\neq 0$ $"$ennobles$"$ the backward problem for the Rayleigh-Stokes equation and its solution already exists for all $\psi\in D(A)$. 

Nevertheless problem (\ref{probBackward}) is also ill-posed in the sense of Hadamard
because of the same reason as the classical one ($ \gamma = 0 $): a
small variation of $u(T)$ in the norm of space $H$ may cause
arbitrarily large variations in the initial data. Indeed, let
$f(t)\equiv 0$ and take
\[
\omega(0)=\lambda^{-1+\varepsilon}_k\cdot\frac{1}{B_{\alpha}(\lambda_k,
	T)} \cdot v_k,\,\,\varepsilon>0,
\]
as a initial condition in problem (\ref{prob2a}).
Then the unique
solution of the problem (\ref{prob2a}) is (see Theorem \ref{prob2a})
\[
\omega(t)=\lambda^{-1+\varepsilon}_k\cdot\frac{B_{\alpha}(\lambda_k, t)}{B_{\alpha}(\lambda_k,
	T)} \cdot v_k.
\]
This function is a unique solution of problem (\ref{probBackward}) with $u(T)=\lambda^{-1+\varepsilon}_k v_k $.

Therefore, on the one hand, $||u(T)||=\lambda^{-1+\varepsilon}_k$
and it tends to zero as $k\rightarrow \infty$ (even
$||u(T)||_{a}\rightarrow 0$ for any $a<1-\varepsilon$), and on the
other,  according to Lemma \ref{BLower},
\[
||\omega(0)||\geq \lambda^{-1+\varepsilon}_k\cdot\frac{\lambda_k}{C(\alpha, \gamma, \lambda_1)} \rightarrow \infty \quad \text{when}
\quad k\rightarrow\infty.
\]

However, if we consider the norm of $u(T)$ in space $D(A)$, then
the situation will change completely; note the norm
$||u(T)||_1=\lambda_k^\varepsilon$ in this example is unbounded as
$k\rightarrow\infty$.

\begin{thm}\label{backward} Let $f(t)\in C([0,T];
	H)$. Then for any
	$\psi\in D(A)$ problem (\ref{prob2a}) has a unique solution.
	Moreover there exists a constant $C= C(\alpha, \gamma, \lambda_1, T)>0$, depending on $\alpha, \gamma, \lambda_1$  and $T$, such that
	\begin{equation}\label{backwardestimate_f}
		||\omega(t)||\leq C\,
		\big(||\omega(T)||_1+ \max\limits_{t\in[0,T]}|f(t)||\big).
	\end{equation}

	Let $f(t)\equiv 0$. Then there exist constants $C_1, C_2>0$, such that
	\begin{equation}\label{backwardestimate}
		C_1||\omega(0)||\leq
		||\omega(T)||_1\leq C_2||\omega(0)||.
	\end{equation}
\end{thm}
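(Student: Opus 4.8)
The plan is to apply the Fourier method to the backward problem (\ref{probBackward}) and then extract both estimates from the two-sided control of $B_\alpha(\lambda_k,T)$ furnished by Lemmas \ref{Bazh} and \ref{BLower}.

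First I would pass to Fourier coefficients. If $\omega$ is a solution and $T_k(t)=(\omega(t),v_k)$, then applying $(\cdot,v_k)$ to the equation and using the selfadjointness of $A$ (so that $(A\omega,v_k)=\lambda_k(\omega,v_k)$), exactly as in the uniqueness part of the proof of Theorem \ref{Thprob2a}, one finds that $T_k$ solves the scalar Cauchy problem (\ref{BCauchyIN}) with parameter $\lambda_k$, right-hand side $f_k(t)=(f(t),v_k)$ and some initial value $T_k(0)$. By Corollary \ref{BazhEquationIN},
\[
T_k(t)=T_k(0)\,B_\alpha(\lambda_k,t)+\int\limits_0^t B_\alpha(\lambda_k,t-\tau)f_k(\tau)\,d\tau .
\]
Since $B_\alpha(\lambda_k,T)>0$ by assertion (1) of Lemma \ref{Bazh}, the terminal condition $T_k(T)=\psi_k$ determines
\[
T_k(0)=\frac{1}{B_\alpha(\lambda_k,T)}\Big(\psi_k-\int\limits_0^T B_\alpha(\lambda_k,T-\tau)f_k(\tau)\,d\tau\Big)
\]
uniquely. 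In particular the homogeneous backward problem ($\psi=0$, $f\equiv 0$) admits only $T_k\equiv 0$, which yields uniqueness as soon as existence is established.

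The only nontrivial point is existence, and it reduces to checking that $\omega(0):=\sum_k T_k(0)v_k\in H$. This is where $\psi\in D(A)$ and $\gamma>0$ enter: by Lemma \ref{BLower}, $1/B_\alpha(\lambda_k,T)\le \lambda_k/C(\alpha,\gamma,\lambda_1)$, so
\[
|T_k(0)|^2\le \frac{2}{C(\alpha,\gamma,\lambda_1)^2}\Big(\lambda_k^2|\psi_k|^2+\Big(\int\limits_0^T \lambda_k B_\alpha(\lambda_k,T-\tau)\,|f_k(\tau)|\,d\tau\Big)^2\Big).
\]
Summation in $k$ of the first group gives $\|\psi\|_1^2=\|A\psi\|^2<\infty$ because $\psi\in D(A)$; the second group is handled exactly as in the derivation of (\ref{estimate_f}), using assertion (3) of Lemma \ref{Bazh} (the kernel $\min\{s^{-1},s^{\alpha-1}\}$ is integrable on $(0,T)$) together with the generalized Minkowski inequality, and is bounded by $C_T\max\limits_{t\in[0,T]}\|f(t)\|^2$. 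Hence $\sum_k|T_k(0)|^2\le C(\alpha,\gamma,\lambda_1,T)\big(\|\psi\|_1^2+\max\limits_{t\in[0,T]}\|f(t)\|^2\big)<\infty$. Consequently the series $\sum_kT_k(t)v_k$ is precisely the representation (\ref{v}) of the solution of the forward problem (\ref{prob2a}) with initial vector $\omega(0)\in H$, so by Theorem \ref{Thprob2a} it lies in $C([0,T];H)$, has $\partial_t\omega,\ \partial_t^\alpha A\omega\in C((0,T);H)$ and satisfies the equation; since $\omega(T)=\psi$ by construction, it is a solution of (\ref{probBackward}) in the sense of Definition \ref{def}. (For the regularity one invokes the same smoothness of $f$ as in Theorem \ref{Thprob2a}, or repeats verbatim the termwise estimates given there, which rely on Lemma \ref{B'New_n}.) I expect this step — verifying $\omega(0)\in H$, i.e. that $\gamma>0$ ``ennobles'' the problem — to be the conceptual heart: for $\gamma=0$ one has $B_\alpha(\lambda,T)=e^{-\lambda T}$ and $\sum_k|T_k(0)|^2$ diverges even for arbitrarily smooth $\psi$.

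Finally the two estimates. For (\ref{backwardestimate_f}), by Parseval's equality and $0<B_\alpha(\lambda_k,t)<1$ (assertion (1) of Lemma \ref{Bazh}),
\[
\|\omega(t)\|\le \Big\|\sum\limits_k B_\alpha(\lambda_k,t)T_k(0)v_k\Big\|+\Big\|\sum\limits_k\Big(\int\limits_0^t B_\alpha(\lambda_k,t-\tau)f_k(\tau)\,d\tau\Big)v_k\Big\|\le \|\omega(0)\|+T\max\limits_{t\in[0,T]}\|f(t)\|,
\]
where the last term is bounded using the generalized Minkowski inequality and $B_\alpha<1$; combining with the bound on $\|\omega(0)\|=\big(\sum_k|T_k(0)|^2\big)^{1/2}$ from the previous paragraph and recalling $\|\psi\|_1=\|\omega(T)\|_1$ gives (\ref{backwardestimate_f}) with $C=C(\alpha,\gamma,\lambda_1,T)$. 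For (\ref{backwardestimate}) take $f\equiv 0$, so that $\psi_k=B_\alpha(\lambda_k,T)\,\omega_k(0)$ and
\[
\|\omega(T)\|_1^2=\|A\psi\|^2=\sum\limits_k\big(\lambda_k B_\alpha(\lambda_k,T)\big)^2|\omega_k(0)|^2 .
\]
Assertion (3) of Lemma \ref{Bazh} gives $\lambda_k B_\alpha(\lambda_k,T)\le C\min\{T^{-1},T^{\alpha-1}\}$, hence the right-hand inequality with $C_2=C\min\{T^{-1},T^{\alpha-1}\}$; Lemma \ref{BLower} gives $\lambda_k B_\alpha(\lambda_k,T)\ge C(\alpha,\gamma,\lambda_1)$, hence the left-hand inequality with $C_1=C(\alpha,\gamma,\lambda_1)$. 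This completes the proof.
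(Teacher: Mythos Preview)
Your proof is correct and follows essentially the same route as the paper: determine the Fourier coefficients of $\omega(0)$ from the terminal condition, use Lemma \ref{BLower} together with assertion (3) of Lemma \ref{Bazh} and the generalized Minkowski inequality to show $\omega(0)\in H$, then invoke Theorem \ref{Thprob2a} for existence and regularity; the estimates (\ref{backwardestimate_f}) and (\ref{backwardestimate}) are extracted from the same two-sided control of $\lambda_k B_\alpha(\lambda_k,T)$. The only cosmetic differences are that the paper estimates each piece of $\|\omega(t)\|$ directly rather than routing through $\|\omega(0)\|$, and for the right inequality in (\ref{backwardestimate}) it cites (\ref{estimatevA1}) instead of re-deriving it; your parenthetical remark about the smoothness of $f$ needed to invoke Theorem \ref{Thprob2a} is in fact a point the paper glosses over.
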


Note that in work \cite{AA} the baclward problem for the subdiffusion equation $\partial_t^\alpha u +Au =f$ was studied. It is shown that an estimate similar to (\ref{backwardestimate}) is valid only with the norm $||\omega(T)||_2$. However, if we take the derivative in the sense of Caputo in the subdiffusion equation, then the estimate (\ref{backwardestimate}) with the norm $||\omega(T)||_1$ is valid (see (\cite{Florida})).

\begin{proof} Let us take a solution (\ref{v}) with an unknown initial function $\varphi$ and use condition $\omega(T)=\psi$ to determine this unknown function. Then
	the Fourier coefficients of $\varphi$ has the form 
	\begin{equation}\label{coef}
	\varphi_k=\frac{1}{B_\alpha (\lambda_k, T)} \left(\psi_k -\int\limits_0^T B_\alpha (\lambda_k, t-\tau)f_k(\tau)d\tau \right),\,\, k\geq 1, 
	\end{equation}
	and the unique formal solution of the backward problem can be written as
\[	
\omega(t)= \sum\limits_{k=1}^\infty \frac{B_\alpha (\lambda_k, t)}{B_\alpha (\lambda_k, T)} \left(\psi_k -\int\limits_0^T B_\alpha (\lambda_k, t-\tau)f_k(\tau)d\tau \right)  v_k + \sum\limits_{k=1}^\infty
\left[\int\limits_{0}^tB_\alpha (\lambda_k, t-\tau)f_k(\tau)d\tau\right] v_k.
\]

According to Theorem \ref{Thprob2a}, this formal solution will indeed be a unique solution to the backward problem if the condition: $\varphi\in H$ is satisfied. Let us check this condition.

Apply Lemma \ref{BLower} to obtain
\[
\sum\limits_{k=1}^j \bigg|\frac{\psi_k}{B_\alpha (\lambda_k, T)}\bigg|^2\leq \frac{1}{C(\alpha, \gamma, \lambda_1)} \sum\limits_{k=1}^j|\lambda_k \psi_k|^2\leq \frac{1}{C(\alpha, \gamma, \lambda_1)}||\psi||_1^2.
\]
Again this lemma and estimate (3) of Lemma \ref{Bazh} imply (see (\ref{estimate_f}))
\[
\sum\limits_{k=1}^j \bigg|\frac{1}{B_\alpha (\lambda_k, T)}\bigg|^2 \left(\int\limits_0^T B_\alpha (\lambda_k, t-\tau)f_k(\tau)d\tau\right)^2\leq 
 \frac{C_T}{C(\alpha, \gamma, \lambda_1)}\max_{t\in[0,T]} ||f(t)||^2.
\]
Thus $\varphi=\sum_k \varphi_k v_k \in H$ and therefore, function $\omega(t)$ defined above is indeed a solution to the backward problem ((\ref{prob2a}).

Let us pass to the proof of estimate (\ref{backwardestimate_f}). We note right away that in order to estimate the norm in $H$, by virtue of the Parseval equality, it is sufficient to estimate a series of Fourier coefficients.

Application of Lemmas \ref{Bazh} and \ref{BLower} gives
\[
\sum\limits_{k=1}^j \left| \frac{B_\alpha (\lambda_k, t)}{B_\alpha (\lambda_k, T)} \psi_k \right|^2\leq \frac{1}{C(\alpha, \gamma, \lambda_1)}\sum\limits_{k=1}^j |\lambda_k \psi_k|^2\leq \frac{1}{C(\alpha, \gamma, \lambda_1)} ||\psi||^2.
\]

Apply Lemma \ref{BLower} for $B_\alpha$ in the denominator, estimate (1) of Lemma \ref{Bazh} for $B_\alpha$ in the numerator, and finally estimate (3) of Lemma \ref{Bazh} for $B_\alpha$ under the integral sign (see (\ref{estimate_f})). Then
\[
\sum\limits_{k=1}^j \bigg|\frac{B_\alpha (\lambda_k, t)}{B_\alpha (\lambda_k, T)}\bigg|^2 \left(\int\limits_0^T B_\alpha (\lambda_k, t-\tau)f_k(\tau)d\tau\right)^2\leq 
\frac{C_T}{C(\alpha, \gamma, \lambda_1)}\max_{t\in[0,T]} ||f(t)||^2.
\]
Similarly, by virtue of estimate (3) of Lemma \ref{Bazh} we obtain
\[
\sum\limits_{k=1}^j
\left[\int\limits_{0}^tB_\alpha (\lambda_k, t-\tau)f_k(\tau)d\tau\right]^2\leq C_T \max_{t\in[0,T]} ||f(t)||^2.
\]

As for estimate (\ref{backwardestimate}), its left part follows from the just proved estimate (\ref{backwardestimate_f}), while the right part is contained in (\ref{estimatevA1}).

\end{proof} 

As noted in the introduction, statements similar to Theorem \ref{backward} were known earlier (see, e.g., \cite{Luc1}, \cite{Luc2} and the bibliography therein). In these papers, operator $A$ is the Laplace operator with the Dirichlet condition in the domain $\Omega\subset R^N$, $N\leq 3$.

Let us make a few remarks:

1. The left side of the estimate (\ref{backwardestimate}) means that a small change of $\omega(T)$ in the norm $||\cdot||_1$ entails a small change in the norm $||\cdot||$ of the initial data.

2. The right side of this estimate asserts the unimprovability of the left side, i.e. it is impossible to replace the norm $||\cdot||_1$ with the norm $||\cdot||_{1-\varepsilon}$ with any $\varepsilon>0$.

3. For the classical diffusion equation (i.e. $\gamma=0$), the statement of Theorem \ref{backward} naturally does not hold, since $C(\alpha, 0, \lambda_1)=0$ (see the definition of this constant in Lemma \ref{BLower}). It should also be noted, that for the backward
problem for the classical diffusion equation (i.e. $\gamma =0$)
estimates of the type (\ref{backwardestimate}) on the scales of spaces $D(A^a)$
are generally impossible (see, for example, Chapter 8.2 of
\cite{Kaban}).

\section{Conditional stability}

Following work Luc, Huynh, O'Regan and Can \cite{Luc2}, in this section we consider the problem of conditional stability of the backward problem. In other words, we impose the following a priori bound condition on the initial data $\omega(0)=\varphi$:
\begin{equation}\label{bond}
||\varphi||^2_\varepsilon=\sum_{k=1}^\infty \lambda_k^{2\varepsilon} |\varphi_k|^2\leq \Phi^2_0, 
\end{equation} 
where $\varepsilon$ and $\Phi_0$ are positive constants, and consider a class of functions that satisfy this condition. The following statement is true:

\begin{thm}\label{stability} Let $\varphi\in D(A^\varepsilon)$ satisfy condition (\ref{bond}). Then there is a constant $C$ depending on $\alpha, \gamma, \lambda_1$ and $T$ such, that
	\begin{equation}\label{stabilityestimate}
	||\varphi||\leq C \big[ ||\psi|| +\max_{0\leq t\leq T} ||f(t)||\big]^{\frac{\varepsilon}{1+\varepsilon}} \Phi_0^{\frac{1}{1+\varepsilon}}.	
\end{equation}
\end{thm}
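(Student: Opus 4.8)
The plan is to combine the Fourier representation of $\varphi=\omega(0)$ obtained in the proof of Theorem~\ref{backward} with the a priori bound~(\ref{bond}) and a standard frequency‑splitting (interpolation) argument, in the spirit of \cite{Luc2}. Recall from~(\ref{coef}) that
\[
\varphi_k=\frac{1}{B_\alpha(\lambda_k,T)}\Big(\psi_k-\int\limits_0^T B_\alpha(\lambda_k,T-\tau)f_k(\tau)\,d\tau\Big),\qquad k\ge1,
\]
where $\psi=\omega(T)$. Put $M:=\|\psi\|+\max_{0\le t\le T}\|f(t)\|$. If $M\ge\Phi_0$ the assertion is trivial, since $\|\varphi\|\le\lambda_1^{-\varepsilon}\|\varphi\|_\varepsilon\le\lambda_1^{-\varepsilon}\Phi_0\le\lambda_1^{-\varepsilon}M^{\varepsilon/(1+\varepsilon)}\Phi_0^{1/(1+\varepsilon)}$; so from now on assume $M<\Phi_0$.

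For a parameter $\rho>0$ to be fixed later, split $\|\varphi\|^2=\sum_{\lambda_k>\rho}|\varphi_k|^2+\sum_{\lambda_k\le\rho}|\varphi_k|^2$. The high‑frequency part is handled directly by~(\ref{bond}):
\[
\sum\limits_{\lambda_k>\rho}|\varphi_k|^2\le\rho^{-2\varepsilon}\sum\limits_{\lambda_k>\rho}\lambda_k^{2\varepsilon}|\varphi_k|^2\le\rho^{-2\varepsilon}\Phi_0^2 .
\]
For the low‑frequency part I would use Lemma~\ref{BLower} to estimate $1/B_\alpha(\lambda_k,T)\le\lambda_k/C(\alpha,\gamma,\lambda_1)$, so that $|\varphi_k|\le C^{-1}\big(\lambda_k|\psi_k|+\lambda_k\int_0^T B_\alpha(\lambda_k,T-\tau)|f_k(\tau)|\,d\tau\big)$. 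The $\psi$‑contribution is bounded by $\sum_{\lambda_k\le\rho}\lambda_k^2|\psi_k|^2\le\rho^2\|\psi\|^2$. For the $f$‑contribution I would write $\lambda_kB_\alpha(\lambda_k,s)\le Cs^{\alpha-1}$ by estimate~(3) of Lemma~\ref{Bazh} (the exponent $\alpha-1>-1$ is integrable at $0$) and then invoke the generalized Minkowski inequality exactly as in~(\ref{estimate_f}):
\[
\sum\limits_{k}\Big(\lambda_k\int\limits_0^T B_\alpha(\lambda_k,T-\tau)|f_k(\tau)|\,d\tau\Big)^2
\le C\Big(\int\limits_0^T(T-\tau)^{\alpha-1}\|f(\tau)\|\,d\tau\Big)^2
\le C_T\max\limits_{0\le t\le T}\|f(t)\|^2 .
\]
Hence $\sum_{\lambda_k\le\rho}|\varphi_k|^2\le C\big(\rho^2\|\psi\|^2+\max_t\|f(t)\|^2\big)\le C\rho^2M^2$, where in the last step we used $\|\psi\|\le M$, $\max_t\|f(t)\|\le M$ and $\rho\ge1$, which will hold for our choice of $\rho$ (if it happened that $\rho<\lambda_1$ the low‑frequency sum would be empty and this step vacuous).

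Putting the two parts together gives, for every $\rho>0$,
\[
\|\varphi\|^2\le\rho^{-2\varepsilon}\Phi_0^2+C\rho^2M^2 .
\]
Now choose $\rho=(\Phi_0/M)^{1/(1+\varepsilon)}$ (which is $\ge1$ because $M<\Phi_0$); a direct computation shows that both terms equal a constant times $M^{2\varepsilon/(1+\varepsilon)}\Phi_0^{2/(1+\varepsilon)}$, and taking square roots yields~(\ref{stabilityestimate}). The only delicate step is the treatment of the source term: a naive term‑by‑term bound through $\sup_\tau|f_k(\tau)|$ need not produce a summable series, so one must first move the factor $\lambda_k$ inside the integral and use Lemma~\ref{Bazh}(3) together with the generalized Minkowski inequality, precisely as in the proof of Theorem~\ref{backward}; the rest is the routine interpolation optimisation over $\rho$.
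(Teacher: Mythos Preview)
Your argument is correct, but it follows a genuinely different route from the paper's. The paper does not split frequencies at all: it writes $\|\varphi\|^2=\sum_k|\Phi_k|^2/B_\alpha(\lambda_k,T)^2$ with $\Phi_k=\psi_k-\int_0^T B_\alpha(\lambda_k,T-\tau)f_k(\tau)\,d\tau$, factors $|\Phi_k|^2=|\Phi_k|^{2\varepsilon/(1+\varepsilon)}\cdot|\Phi_k|^{2/(1+\varepsilon)}$, and applies H\"older's inequality in $\ell^2$ with exponents $p=(1+\varepsilon)/\varepsilon$, $q=1+\varepsilon$ to obtain
\[
\|\varphi\|^2\le\Big(\sum_k|\Phi_k|^2\Big)^{\varepsilon/(1+\varepsilon)}\Big(\sum_k B_\alpha(\lambda_k,T)^{-2\varepsilon}|\varphi_k|^2\Big)^{1/(1+\varepsilon)}.
\]
The second factor is controlled by Lemma~\ref{BLower} and~(\ref{bond}) exactly as you expect. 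For the first factor the paper is more economical than you: it uses only $0<B_\alpha<1$ (Lemma~\ref{Bazh}(1)) to bound $\sum_k|\Phi_k|^2\le 2\|\psi\|^2+2T\max_t\|f(t)\|^2$; there is no need for Lemma~\ref{Bazh}(3) or the generalized Minkowski inequality. Your frequency-cutoff-and-optimise argument is the ``real interpolation'' counterpart of this single H\"older step and leads to the same bound, but at the cost of the extra case distinction $M\gtrless\Phi_0$, the auxiliary parameter $\rho$, and a heavier treatment of the source term. Both approaches are standard; the paper's is shorter and avoids the side conditions you had to manage.
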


This theorem for the case of the Laplace operator with the Dirichlet condition in $N$, $N\leq 3$, - dimensional domain was proved in the above paper \cite{Luc2} (note that the condition of the theorem in \cite{Luc2} is slightly different). Note that in this case the estimates $\lambda_k\geq C k^{\frac{2}{N}}$ hold for the eigenvalues of the Laplace operator, and the proof in \cite{Luc2} is based on the convergence of the series 
\[
\sum_{k=1}^\infty\frac{1}{\lambda^2_k} \leq C\sum_{k=1}^\infty\frac{1}{k^{\frac{4}{N}}} < \infty.
\]

The proof of Theorem \ref{stability}  repeats the proof of \cite{Luc2} with a slight change. For the convenience of the reader, we present this proof.

We emphasize that the assertion of Theorem \ref{stability} is valid without any conditions on the spectrum of operator $A$.

\begin{proof}Set (see (\ref{coef}))
	\[
	\Phi_k=\psi_k -\int\limits_0^T B_\alpha (\lambda_k, t-\tau)f_k(\tau)d\tau, \, k\geq 1.
	\]
	Then
	\[
	||\varphi||^2=\sum_{k=1}^\infty\left|\frac{\Phi_k}{B_\alpha(\lambda_k,t)}\right|^2=
	\sum_{k=1}^\infty\frac{|\Phi_k|^{\frac{2\varepsilon}{1+\varepsilon}}|\Phi_k|^{\frac{2}{1+\varepsilon}}}{|B_\alpha(\lambda_k,t)|^2}.
	\]
	Apply the Holder inequality with parametrs $p=\frac{1+\varepsilon}{\varepsilon}$ and $q=1+\varepsilon$ to obtain
	\[
		||\varphi||^2\leq \left(\sum_{k=1}^\infty|\Phi_k|^2\right)^{\frac{\varepsilon}{1+
		\varepsilon}}\left(\sum_{k=1}^\infty\frac{1}{|B_\alpha(\lambda_k,t)|^{2\varepsilon}}\left|\frac{\Phi_k}{B_\alpha(\lambda_k,t)}\right|^2\right)^{\frac{1}{1+
		\varepsilon}}.
	\]
We use 	Lemma \ref{BLower} to get
	\[
	\sum_{k=1}^\infty\frac{1}{|B_\alpha(\lambda_k,t)|^{2\varepsilon}}\left|\frac{\Phi_k}{B_\alpha(\lambda_k,t)}\right|^2\leq \frac{1}{C^{2\varepsilon}(\alpha, \gamma,\lambda_1)}\sum_{k=1}^\infty|\lambda_k^\varepsilon\, \varphi_k|^2\leq \frac{\Phi_0^2}{C^{2\varepsilon}(\alpha, \gamma,\lambda_1)}.
	\]
	On the other hand  estimate (1) of Lemma \ref{Bazh} and the Holder inequality give
	\[
	\sum_{k=1}^\infty|\Phi_k|^2\leq 2||\psi||^2+2\sum_{k=1}^\infty \left[\int\limits_0^T |f_k(\tau)|d\tau\right]^2\leq 2||\psi||^2+2T\int\limits_0^T \sum_{k=1}^\infty |f_k(\tau)|^2 d\tau\leq
	\]
	\[
	\leq 2||\psi||^2+2T\max_{0\leq t\leq T}||f(t)||^2.
	\]
	The last two estimates imply estimate (\ref{bond}).
	\end{proof}

\section{Auxiliary	problem (\ref{prob1a})}

If we set $\varphi=0$, then from Theorem \ref{Thprob2a} we have the following result for our auxiliary problem (\ref{prob1a}).

\begin{thm}\label{Thprob1a}
	Let  $f(t) \in C ([0,T];D(A^\varepsilon))$ for some $\varepsilon\in (0,1)$. Then problem (\ref{prob1a}) has a unique solution and this solution has the representation
	\begin{equation}\label{v_fi0}
		v(t)= \sum\limits_{k=1}^\infty
		\left[\int\limits_{0}^tB_\alpha (\lambda_k, t-\tau)f_k(\tau)d\tau\right] v_k.
	\end{equation}
	Moreover, there is a constant $ C_\varepsilon> 0 $ such that the
	following coercive type inequality holds:
	\begin{equation}\label{estimatev_fi0}
		||\partial_t v(t)||^2 + ||\partial_t^\alpha A v(t)||^2 \leq C_\varepsilon
		\max\limits_{t\in[0,T]} ||f||_\varepsilon^2,\quad 0 \leq t \leq T.
	\end{equation}
\end{thm}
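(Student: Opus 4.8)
The plan is to obtain Theorem \ref{Thprob1a} as an immediate specialization of Theorem \ref{Thprob2a} to the case $\varphi = 0$. The first step is to note that problem (\ref{prob1a}) is precisely problem (\ref{prob2a}) with initial data $\varphi = 0 \in H$; since the hypothesis on $f$ is the same, Theorem \ref{Thprob2a} applies verbatim and already yields existence and uniqueness of the solution. For the representation, I would substitute $\varphi_k \equiv 0$ into formula (\ref{v}): the first series $\sum_k B_\alpha(\lambda_k, t)\varphi_k v_k$ vanishes identically, and what is left is exactly (\ref{v_fi0}).

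The second step is the coercive estimate. Substituting $\varphi = 0$ into (\ref{estimatev}) kills the singular term $C t^{-2}||\varphi||^2$, leaving $||\partial_t v(t)||^2 + ||\partial_t^\alpha A v(t)||^2 \leq C_\varepsilon \max\limits_{t\in[0,T]}||f||_\varepsilon^2$ for $0 < t \leq T$. The one small point requiring attention is that Theorem \ref{Thprob1a} asserts this on the closed interval $[0,T]$, whereas (\ref{estimatev}) is stated on $(0,T]$. I would handle the endpoint $t=0$ by observing that the bound on the right is independent of $t$: the $f$-dependent estimates used in the proof of Theorem \ref{Thprob2a} (via the generalized Minkowski inequality and Lemma \ref{B'New_n}) carry no singularity in $t$, and $\partial_t v(t) \to f(0)$ while $\partial_t^\alpha A v(t)$ stays bounded as $t \to 0^+$ because the integral terms in the term-by-term derivatives tend to zero there; hence the inequality passes to the limit and holds at $t=0$ as well, giving (\ref{estimatev_fi0}).

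Since the result is a corollary, there is no genuine obstacle; the only thing one must be slightly careful about is exactly this extension to $t = 0$, which is why the interval in Theorem \ref{Thprob2a} is $(0,T]$ — the singularity there comes solely from $\varphi$ and disappears once $\varphi = 0$. Alternatively, one could avoid any limiting argument by simply re-running the $f$-parts of the proof of Theorem \ref{Thprob2a}: Parseval's equality, the identity $\partial_t\int_0^t B_\alpha f_k\,d\tau = f_k - \int_0^t \partial_t B_\alpha f_k\,d\tau$, the generalized Minkowski inequality, Lemma \ref{B'New_n}, and equation (\ref{prob1a}) to control $(1+\gamma\partial_t^\alpha)Av$ — all of which produce $t$-uniform bounds of the form $C_\varepsilon \max\limits_{t\in[0,T]}||f||_\varepsilon$ directly, valid on all of $[0,T]$.
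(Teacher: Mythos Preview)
Your proposal is correct and matches the paper's approach exactly: the paper simply states that the result follows from Theorem \ref{Thprob2a} by setting $\varphi=0$, giving no further argument. Your additional care about extending the estimate to the endpoint $t=0$ is more than the paper itself provides.
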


If $f(t)$ does not depend on $t$, then the statement of Theorem \ref{prob1a} is true for all $f \in H$.

\begin{cor}Let  $f \in H$. Then problem (\ref{prob1a}) has a unique solution and this solution has the representation
	\begin{equation}\label{vno_t}
		v(t)= \sum\limits_{k=1}^\infty
		f_k v_k \int\limits_{0}^t B_\alpha (\lambda_k, \tau)d\tau.
	\end{equation}
	Moreover, there is a constant $ C> 0 $ such that the
	following coercive type inequality holds:
	\begin{equation}\label{estimatev_no_t}
		||\partial_t v(t)||^2 + ||\partial_t^\alpha A v(t)||^2 \leq C
		||f||^2,\quad 0 < t \leq T.
	\end{equation}

\end{cor}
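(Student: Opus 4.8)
The plan is to specialize Theorem~\ref{Thprob1a} to the case of a time-independent source $f(t)\equiv f\in H$, observing that the extra smoothness assumption $f\in D(A^\varepsilon)$ can be dropped in this setting. First I would derive the representation formula: with $f_k(\tau)=f_k$ constant in $\tau$, the inner integral in \eqref{v_fi0} becomes $f_k\int_0^t B_\alpha(\lambda_k,t-\tau)\,d\tau=f_k\int_0^t B_\alpha(\lambda_k,\eta)\,d\eta$ after the change of variables $\eta=t-\tau$, which gives \eqref{vno_t} immediately.

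The main work is establishing \eqref{estimatev_no_t} without the $D(A^\varepsilon)$-norm on the right-hand side. The key point is that the obstruction in Theorem~\ref{Thprob1a} — controlling $\sum_k\big[\int_0^t\partial_t B_\alpha(\lambda_k,t-\tau)f_k(\tau)\,d\tau\big]^2$ via Lemma~\ref{B'New_n}, which forces a factor $\lambda_k^{1/n}$ and hence an $\varepsilon$-loss — disappears here. Indeed, after differentiating \eqref{vno_t} in $t$ we get $\partial_t v(t)=\sum_k f_k B_\alpha(\lambda_k,t)v_k$ directly (no convolution, no $\partial_t B_\alpha$ term), so by Parseval and estimate (1) of Lemma~\ref{Bazh}, $\|\partial_t v(t)\|^2=\sum_k |f_k|^2 B_\alpha(\lambda_k,t)^2\le \sum_k|f_k|^2=\|f\|^2$. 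For the second term I would use the equation $\partial_t^\alpha Av(t)=\tfrac1\gamma\big(f(t)-\partial_t v(t)-Av(t)\big)$, so I must bound $\|Av(t)\|$. From \eqref{vno_t}, $Av(t)=\sum_k \lambda_k f_k v_k\int_0^t B_\alpha(\lambda_k,\eta)\,d\eta$, and estimate (4) of Lemma~\ref{Bazh} gives $\lambda_k\int_0^t B_\alpha(\lambda_k,\eta)\,d\eta\le \lambda_k\cdot\frac1{\lambda_k}=1$, whence $\|Av(t)\|^2\le\sum_k|f_k|^2=\|f\|^2$. Combining the three bounds yields $\|\partial_t v(t)\|^2+\|\partial_t^\alpha Av(t)\|^2\le C\|f\|^2$ with an absolute constant $C$ (depending only on $\gamma$).

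For uniqueness and the justification that the formally differentiated series converge (so that $v(t)$ is a genuine solution in the sense of Definition~\ref{def}), I would note that everything reduces to the Fourier-coefficient estimates just obtained, exactly as in the proof of Theorem~\ref{Thprob1a}: the partial sums of the series for $v$, $\partial_t v$, $Av$ and hence $\partial_t^\alpha Av$ are Cauchy in $H$ uniformly (on compact subsets of $(0,T]$, and for $\partial_t v$, $Av$ even on $[0,T]$), and uniqueness follows since the difference of two solutions has Fourier coefficients solving \eqref{BCauchyIN} with $f\equiv0$, $y_0=0$, which Corollary~\ref{BazhEquationIN} forces to vanish. I do not anticipate a serious obstacle: the only subtlety is to remember to invoke Lemma~\ref{Bazh}(4) rather than Lemma~\ref{Bazh}(3) for the $\|Av\|$ bound, which is precisely what removes the $\varepsilon$ and even improves the estimate to hold at $t=0$ for the first two terms; one should be slightly careful that $\partial_t^\alpha Av(t)$ only need be continuous on $(0,T)$ per Definition~\ref{def}, so no claim at $t=0$ is required there.
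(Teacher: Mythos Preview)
Your proposal is correct and follows essentially the same line as the paper: the crucial observation is that for constant $f_k$ one has $\partial_t v(t)=\sum_k f_k B_\alpha(\lambda_k,t)v_k$, which is bounded directly by Lemma~\ref{Bazh}(1), and then the remaining term is handled via the equation exactly as in the proof of Theorem~\ref{Thprob2a}. Your explicit invocation of Lemma~\ref{Bazh}(4) to bound $\|Av(t)\|$ is a small refinement the paper does not spell out --- the paper only bounds $\|(1+\gamma\partial_t^\alpha)Av\|$ as a whole --- but it is in the same spirit and in fact makes the stated coercive inequality for $\|\partial_t^\alpha Av\|$ cleaner.
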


\begin{proof}Estimate (1) of Lemma \ref{Bazh} implies
	\[
\bigg|\bigg|\sum\limits_{k=1}^j
f_k v_k \partial_t\int\limits_{0}^t B_\alpha (\lambda_k, \tau)d\tau \bigg|\bigg|^2=\sum\limits_{k=1}^j
|f_k|^2|B_\alpha (\lambda_k, t)|^2\leq ||f||. 
	\]
	Using this estimate and repeating the arguments similar to the proof of Theorem \ref{prob1a},
	it is easy to check that (\ref{vno_t}) is indeed a unique solution to problem (\ref{prob1a}) and estimate (\ref{estimatev_no_t})
	holds true.

	\end{proof}

Note that the integral in (\ref{vno_t}) can be rewritten in the form
\[
\int\limits_0^t B_\alpha (\lambda_k, \tau) d\tau =\frac{1}{\lambda_k} (1- A_\alpha(\lambda_k, t)),
\]
where function $A_\alpha(\lambda_k, t))$ is also studied in \cite{Bazh}. In particular, it is shown that the Laplace transform of this function is
\[
\mathcal{L}\{A_\alpha(\lambda_k, \cdot)\}(z)=\frac{1+\gamma \lambda_k z^{\alpha-1}}{z+\gamma \lambda_k z^{\alpha}+\lambda_k},
\]
and the estimates
\[
0< A_\alpha(\lambda_k, t) \leq 1
\] 
hold.

\section{The second auxiliary and non-local	problems}

In this section we first consider auxiliary problem (\ref{prob1b}) and then we obtain the result for the non-local	problem (\ref{probMain}). 

\begin{thm}\label{Thprob1b}
	Let  $\psi\in H$. Then problem (\ref{prob1b}) has a unique solution and this solution has the representation
	\begin{equation}\label{w}
		w(t)= \sum\limits_{k=1}^\infty
		\frac{B_\alpha(\lambda_k, t)}{B_\alpha(\lambda_k, T)-1} \psi_k v_k.
	\end{equation}
	Moreover, there is a constant $ C> 0 $ such that the
	following coercive type inequality holds:
	\begin{equation}\label{estimatev_w}
		||\partial_t w(t)||^2 + ||\partial_t^\alpha A w(t)||^2 \leq C \frac{1}{t^2} ||\psi||^2,\quad 0 < t \leq T.
	\end{equation}
\end{thm}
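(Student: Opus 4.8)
The plan is to apply the Fourier method exactly as in the proof of Theorem~\ref{Thprob2a}, specializing the initial condition to make the non-local constraint $w(T)=w(0)+\psi$ hold. First I would seek $w(t)=\sum_k T_k(t) v_k$, where each $T_k$ solves the homogeneous scalar equation $T_k'(t)+\lambda_k(1+\gamma\partial_t^\alpha)T_k(t)=0$; by Corollary~\ref{BazhEquationIN} with $f\equiv 0$ this forces $T_k(t)=c_k B_\alpha(\lambda_k,t)$ for a constant $c_k$. Imposing $T_k(T)=T_k(0)+\psi_k$ and using $B_\alpha(\lambda_k,0)=1$ (estimate (1) of Lemma~\ref{Bazh}) gives $c_k(B_\alpha(\lambda_k,T)-1)=c_k-\psi_k$... wait, more directly $c_k B_\alpha(\lambda_k,T)=c_k+\psi_k$, so $c_k=\psi_k/(B_\alpha(\lambda_k,T)-1)$, which is exactly the coefficient in~(\ref{w}). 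A crucial preliminary observation is that $B_\alpha(\lambda_k,T)-1\neq 0$: by estimate (1) of Lemma~\ref{Bazh} we have $0<B_\alpha(\lambda_k,T)<1$, so $B_\alpha(\lambda_k,T)-1<0$ and the denominator is bounded away from zero; moreover one needs a quantitative lower bound, namely $1-B_\alpha(\lambda_k,T)\geq c>0$ uniformly in $k$, which follows since $B_\alpha(\lambda_k,T)\leq B_\alpha(\lambda_1,T)<1$ by monotonicity of $B_\alpha$ in $\lambda$ (this monotonicity in $\lambda$, or at least the bound for the smallest eigenvalue, should be extracted from the integral representation~(\ref{BInt}) and Lemma~\ref{Bazh}).

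Next I would verify convergence of the series~(\ref{w}) in $H$ and of the differentiated series. For the series itself, $|c_k B_\alpha(\lambda_k,t)|\leq |B_\alpha(\lambda_k,t)|\,|\psi_k|/(1-B_\alpha(\lambda_1,T))\leq C|\psi_k|$ by estimate (1) of Lemma~\ref{Bazh}, so $w(t)\in C([0,T];H)$ with $\|w(t)\|\leq C\|\psi\|$. For $\partial_t w$, write $\partial_t w(t)=\sum_k c_k\partial_t B_\alpha(\lambda_k,t) v_k$; applying Parseval and then estimate~(\ref{Bn}) from Lemma~\ref{B'New_n}, $\|\partial_t w(t)\|^2=\sum_k |c_k|^2|\partial_t B_\alpha(\lambda_k,t)|^2\leq \frac{C}{t^2}\sum_k|c_k|^2\leq \frac{C}{t^2}\|\psi\|^2$, which is one half of~(\ref{estimatev_w}). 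For the fractional term, I would use the equation itself: since $w$ solves $\partial_t w+(1+\gamma\partial_t^\alpha)Aw=0$, we get $\|\partial_t^\alpha Aw(t)\|=\|(1+\gamma\partial_t^\alpha)Aw(t)-Aw(t)\|\leq \gamma^{-1}\|\partial_t w(t)\|+\text{(lower order)}$; more carefully, $(1+\gamma\partial_t^\alpha)Aw=-\partial_t w$, hence $\|(1+\gamma\partial_t^\alpha)Aw(t)\|=\|\partial_t w(t)\|\leq \frac{C}{t}\|\psi\|$, and then $\gamma\|\partial_t^\alpha Aw(t)\|\leq \|\partial_t w(t)\|+\|Aw(t)\|$. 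The term $\|Aw(t)\|$ needs a separate bound: $\|Aw(t)\|^2=\sum_k \lambda_k^2|c_k|^2|B_\alpha(\lambda_k,t)|^2\leq C\sum_k\lambda_k^2 |B_\alpha(\lambda_k,t)|^2|\psi_k|^2$, and by estimate (3) of Lemma~\ref{Bazh}, $\lambda_k B_\alpha(\lambda_k,t)\leq C t^{-1}$, so $\|Aw(t)\|^2\leq \frac{C}{t^2}\|\psi\|^2$, giving~(\ref{estimatev_w}) after combining.

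For uniqueness I would argue as in Theorem~\ref{Thprob2a}: if $w^1,w^2$ are two solutions, their difference $w$ solves the homogeneous equation with $w(T)=w(0)$, and its Fourier coefficients $T_k(t)=(w(t),v_k)$ solve the scalar homogeneous Cauchy problem, so $T_k(t)=T_k(0)B_\alpha(\lambda_k,t)$ by Corollary~\ref{BazhEquationIN}; the constraint $T_k(T)=T_k(0)$ forces $T_k(0)(B_\alpha(\lambda_k,T)-1)=0$, and since $B_\alpha(\lambda_k,T)-1\neq 0$ we get $T_k(0)=0$, hence $T_k\equiv 0$ for all $k$, so $w\equiv 0$ by completeness of $\{v_k\}$. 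I also need to check that the candidate~(\ref{w}) actually satisfies Definition~\ref{def}, i.e. that $\partial_t w$ and $\partial_t^\alpha Aw$ lie in $C((0,T);H)$ — this follows from the uniform-in-$k$ estimates just established together with term-by-term differentiation justified by locally uniform convergence of the differentiated series on compact subsets of $(0,T)$.

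The main obstacle I anticipate is establishing the uniform lower bound $1-B_\alpha(\lambda_k,T)\geq c>0$ independent of $k$ — in particular ruling out the possibility that $B_\alpha(\lambda_k,T)\to 1$ as $k\to\infty$. This is where one must use that $B_\alpha(\lambda_k,t)$ is \emph{decreasing} in $\lambda$ (so that $B_\alpha(\lambda_k,T)\leq B_\alpha(\lambda_1,T)$), which is not literally among the four items of Lemma~\ref{Bazh} but is visible from the integral representation~(\ref{BInt}): differentiating $b_\alpha(\lambda,r)$ in $\lambda$, or more simply observing that $y'+\lambda(1+\gamma\partial_t^\alpha)y=0$ with larger $\lambda$ decays faster, yields monotonicity. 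Alternatively, since $B_\alpha(\lambda_1,T)<1$ strictly by estimate (1) of Lemma~\ref{Bazh} and $0<B_\alpha(\lambda_k,T)<B_\alpha(\lambda_1,T)$, the denominator $B_\alpha(\lambda_k,T)-1$ is bounded in modulus below by $1-B_\alpha(\lambda_1,T)>0$; once this is in hand everything else is a routine repetition of the estimates from Theorem~\ref{Thprob2a} and Lemma~\ref{B'New_n}.
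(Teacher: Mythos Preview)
Your proposal is correct and follows essentially the same route as the paper: Fourier expansion, Corollary~\ref{BazhEquationIN} for the scalar coefficients, estimate~(\ref{Bn}) for $\|\partial_t w\|$, the equation itself for the remaining term, and uniqueness via completeness. Regarding the uniform lower bound $1-B_\alpha(\lambda_k,T)\geq c>0$ that you flag as the main obstacle, the paper simply writes $|h_k|\leq C|\psi_k|$ without elaboration; your monotonicity-in-$\lambda$ argument works, but a quicker route is estimate~(3) of Lemma~\ref{Bazh}, which gives $B_\alpha(\lambda_k,T)\leq C/(\lambda_k T)\to 0$, so together with $0<B_\alpha(\lambda_k,T)<1$ for every $k$ the supremum over $k$ is strictly below~$1$.
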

\begin{proof}

The solution to problem (\ref{prob1b}) will be sought in the form of a Fourier series
\[
w(t)=\sum_{k=1}^\infty T_k(t) v_k,
\]
where $T_k(t)$ is the solution to the non-local problem
\begin{equation}\label{non-local_T}
	\left\{
	\begin{aligned}
		&\partial_t T(t)  + \lambda_k(1+\gamma\, \partial_t^\alpha)T_k(t) = 0,\quad 0< t < T;\\
		&T_k(T) =T_k(0)+ \psi_k,
	\end{aligned}
	\right.
\end{equation}
Assuming $T_k(0)= h_k$ to be known, we write a solution to equation (\ref{non-local_T}) with this initial condition (see (\ref{BazhEquationIN}))
\[
T_k(t)= h_k B_\alpha(\lambda_k, t).
\]
Now, using the non-local condition in (\ref{non-local_T}), we obtain an equation for finding the unknowns $h_k$:
\[
h_k (B_\alpha(\lambda_k, T)-1)=\psi_k.
\]
Since $T>0$ and $\lambda_k>0$, then $0<B_\alpha(\lambda_k, T)<1$ (see Lemma \ref{Bazh}). Therefore
\[
h_k=\frac{\psi_k}{B_\alpha(\lambda_k, T)-1}, \,\, |h_k|\leq C |\psi_k|,\,\, k\geq 1,
\]
and (\ref{w}) is a formal solution of problem (\ref{prob1b}).

The fact that the series (\ref{w}) converges is beyond doubt. It remains to show that this series can be term-by-term differentiated.

First apply Parseval's equality and (\ref{Bn}) to get
\[
\bigg|\bigg|\sum\limits_{k=1}^j \frac{\partial_t B_\alpha (\lambda_k, t)}{B_\alpha(\lambda_k, T)-1}\psi_k v_k\bigg|\bigg|^2\leq \frac{C}{t^2} ||\psi||^2, \,\, t>0.	
\]

On the other  hand equation (\ref{prob1b}) implies
\[
||(1+\gamma \partial_t^\alpha) A w||^2\leq ||\partial_t w||^2.
\]
Therefore, it follows from the above
\[
\bigg|\bigg|(1+\gamma \partial_t^\alpha) A \sum\limits_{k=1}^j\frac{
B_\alpha (\lambda_k, t)}{B_\alpha(\lambda_k, T)-1}\psi_k v_k\bigg|\bigg|^2\leq C\frac{1}{t^2} ||\psi||, \,\, t>0.
\]

The uniqueness of the problem solution (\ref{prob1b}) is based on
the completeness of the set of eigenfunctions $\{v_k\}$ in $H$ and the estimate $B_\alpha(\lambda_k, T)<1$. Indeed, if we assume that there are two solutions $w^1$ and $w^2$, then the difference $w=w^1-w^2$ is a solution to the homogeneous problem:
\[
\left\{
\begin{aligned}
	&\partial_t w(t)  + (1+\gamma\, \partial_t^\alpha)A w(t) = 0,\quad 0< t \leq T;\\
	&w(T)=w(0).
\end{aligned}
\right.
\]
Let $w(t)$ be any soltion of this problem. Consider the Fourier coefficients
$T_k(t)=(w(t), v_k)$. It is not hard to see, that $T_k$ is a solution of the problem
\[
\partial_t T_k(t)  + \lambda_k (1+\gamma\, \partial_t^\alpha) T_k(t) = 0,\quad 0< t \leq T,\,\, T_k(T)=T_k(0).
\]
The solution of the Cauchy problem with the initial data $T_k(0)=h_k$ has the form $T_k(t) = h_k B_\alpha(\lambda_k, t)$ (see Corollary \ref{BCauchySolutionIN}). The non-local condition implies $h_k B_\alpha(\lambda_k, T)=h_k$. Therefore $T_k(t)\equiv 0$ for all $k$. Since 
the set of eigenfunctions $\{v_k\}$ complete in $H$, then $w(t)\equiv 0$.

\end{proof}

Combining the statements of the last two theorems, we obtain the following assertion for the non-local problem (\ref{probMain}).

\begin{thm}\label{Umain} Let  $ \varphi \in H $ and $f(t) \in C ([0,T];D(A^\varepsilon))$ for some $\varepsilon\in (0,1)$. Then problem (\ref{probMain}) has a unique solution and this solution has the form \begin{equation}\label{u}
		u(t)= \sum\limits_{k=1}^\infty
		\left[\frac{\varphi_k-y_k(T)}{B_\alpha(\lambda_k, T)-1} B_\alpha(\lambda_k, t)  +y_k(t)\right] v_k,
	\end{equation}
where
\[
y_k(t)=\int\limits_{0}^tB_\alpha (\lambda_k, t-\tau)f_k(\tau)d\tau
\]
	Moreover, there are positive constants $C$ and $ C_\varepsilon$ such that the
	following coercive type inequality holds:
	\begin{equation}\label{estimateu}
		||\partial_t u(t)||^2 + ||\partial_t^\alpha A u(t)||^2 \leq C\frac{1}{t^2} ||\varphi||+ C_\varepsilon
		\max\limits_{t\in[0,T]} ||f||_\varepsilon^2,\quad 0 < t < T.
	\end{equation}
	
\end{thm}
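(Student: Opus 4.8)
The plan is to use the decomposition into auxiliary problems announced in Section 2 and to reduce everything to the already-proved Theorems \ref{Thprob1a} and \ref{Thprob1b}. First I would let $v(t)$ be the unique solution of problem (\ref{prob1a}) provided by Theorem \ref{Thprob1a}; since $v\in C([0,T];H)$, the vector $\psi:=\varphi-v(T)$ is a well-defined element of $H$, with Fourier coefficients $\psi_k=\varphi_k-y_k(T)$ where $y_k(t)=\int_0^t B_\alpha(\lambda_k,t-\tau)f_k(\tau)d\tau$. Next I would let $w(t)$ be the unique solution of problem (\ref{prob1b}) with this particular $\psi$, supplied by Theorem \ref{Thprob1b}, i.e.\ $w(t)=\sum_{k}\frac{B_\alpha(\lambda_k,t)}{B_\alpha(\lambda_k,T)-1}\psi_k v_k$. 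Putting $u:=v+w$ and adding the two Fourier series produces exactly the representation (\ref{u}).

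It then remains to verify that $u$ is a solution in the sense of Definition \ref{def} and that it is the only one. The regularity $u\in C([0,T];H)$ and $\partial_t u,\ \partial_t^\alpha Au\in C((0,T);H)$ is inherited termwise from $v$ and $w$. Linearity of the equation gives $\partial_t u+(1+\gamma\partial_t^\alpha)Au=f+0=f$ on $(0,T]$, and for the non-local condition one computes, using $v(0)=0$ and $w(T)=w(0)+\psi$, that $u(T)=v(T)+w(0)+\psi=v(T)+w(0)+\varphi-v(T)=w(0)+\varphi=u(0)+\varphi$, which is condition (\ref{probMain}) with $\beta=1$. Uniqueness is a consequence of the uniqueness parts of Theorems \ref{Thprob1a} and \ref{Thprob1b}: if $u_1,u_2$ are two solutions with the same data, their difference $d:=u_1-u_2$ solves the homogeneous equation with $d(T)=d(0)$, which is problem (\ref{prob1b}) with $\psi=0$, hence $d\equiv 0$.

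For the coercive estimate I would start from $||\partial_t u(t)||^2+||\partial_t^\alpha Au(t)||^2\leq 2\big(||\partial_t v(t)||^2+||\partial_t^\alpha Av(t)||^2\big)+2\big(||\partial_t w(t)||^2+||\partial_t^\alpha Aw(t)||^2\big)$, bound the $v$-part by $C_\varepsilon\max\limits_{t\in[0,T]}||f||_\varepsilon^2$ via (\ref{estimatev_fi0}), and the $w$-part by $Ct^{-2}||\psi||^2$ via (\ref{estimatev_w}). It then suffices to control $||\psi||^2\leq 2||\varphi||^2+2||v(T)||^2$, and $||v(T)||^2$ is estimated from the series for $v$ by estimate (1) of Lemma \ref{Bazh} together with Hölder's inequality: $|y_k(T)|^2\leq T\int_0^T|f_k(\tau)|^2d\tau$, so $||v(T)||^2\leq T^2\max\limits_{t\in[0,T]}||f||^2\leq T^2\max\limits_{t\in[0,T]}||f||_\varepsilon^2$. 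Collecting these bounds gives (\ref{estimateu}).

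Concerning the main obstacle: there is little new analytic difficulty here, since all the nontrivial estimates were already established in Theorems \ref{Thprob1a} and \ref{Thprob1b}, and the argument is essentially a bookkeeping exercise in how the pieces fit together. The one point needing a little attention is the $t^{-2}$ singularity as $t\to 0$, which enters only through the $w$-component (the $v$-component staying bounded up to $t=0$ by Theorem \ref{Thprob1a}), so one must check that splitting $u=v+w$ does not spoil the requirement $\partial_t u,\ \partial_t^\alpha Au\in C((0,T);H)$ on the open interval, which it does not. A second, purely formal, point is to confirm that $\psi=\varphi-v(T)\in H$ so that Theorem \ref{Thprob1b} is applicable — immediate once $v\in C([0,T];H)$ is known.
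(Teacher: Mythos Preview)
Your proposal is correct and follows essentially the same route as the paper: set $\psi=\varphi-v(T)$, take $u=v+w$ from the two auxiliary problems, and read off representation (\ref{u}) and estimate (\ref{estimateu}) from Theorems \ref{Thprob1a} and \ref{Thprob1b}. In fact you supply more detail than the paper does, explicitly verifying the non-local condition, handling uniqueness via the $\psi=0$ case of problem (\ref{prob1b}), and bounding $\|\psi\|$ through $\|v(T)\|$; the paper simply asserts that (\ref{estimateu}) ``follows from estimates (\ref{estimatev_fi0}) and (\ref{estimatev_w}).''
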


\begin{proof}Let
	$ \varphi \in H $ and $f(t) \in C ([0,T];D(A^\varepsilon))$ for
	some $\varepsilon\in (0,1)$. As noted above, if we put
	$\psi=\varphi-v(T)\in H$ and $v(t)$ and $w(t)$ are the
	corresponding solutions of problems (\ref{prob1a}) and
	(\ref{prob1b}), then function $u(t)=v(t)+w(t)$ is a solution
	to problem (\ref{probMain}). Therefore, the solution of problem  (\ref{probMain}) has the form (\ref{w}).
	
	Estimate (\ref{estimateu}) follows from estimates (\ref{estimatev_fi0}) and (\ref{estimatev_w}). 
	
	\end{proof}

\section{Conclusion}
In many papers, explicit solutions of the simplest Rayleigh-Stokes problems are constructed. In \cite{Bazh}, the regularity of the solution was studied for the first time and a formal formula for solving an inhomogeneous problem in a three-dimensional domain was given. The backward problem has also been studied by the authors in domains with dimensions less than four. Some proofs of these results essentially use the fact that the dimension of the domain under consideration is $N\leq 3$.

In this paper, all these questions are investigated for the case of an abstract operator instead of the Laplace operator in the Rayleigh-Stokes problem. In addition, a new non-local problem for equation Rayleigh-Stokes is considered and it is shown that, unlike the backward problem, it is well-posed. As the operator $A$, we can take the Laplace operator with the Dirichlet condition in an arbitrary $N$-dimensional domain with a sufficiently smooth boundary.

\section{Acknowledgement}
The authors are grateful to Sh. A. Alimov for discussions of
these results.

The authors acknowledge financial support from the  Ministry of Innovative Development of the Republic of Uzbekistan, Grant No F-FA-2021-424.


\end{document}